\newtheorem{theorem}[subsubsection]{Theorem}
\newtheorem{lemma}[subsubsection]{Lemma}
\newtheorem{proposition}[subsubsection]{Proposition}
\newtheorem{corollary}[subsubsection]{Corollary}
\theoremstyle{definition}
\newtheorem{definition}[subsubsection]{Definition}
\theoremstyle{remark}
\newtheorem{remark}[subsubsection]{Remark}
\DeclareMathAlphabet{\mathpzc}{OT1}{pzc}{m}{it}
 \newcommand{\FF}{{\mathbb F}}
 \newcommand{\QQ}{{\mathbb Q}}
 \newcommand{\CC}{{\mathbb C}}
 \newcommand{\NN}{{\mathbb N}}
 \newcommand{\GG}{{\mathbb G}}
\renewcommand{\a}{\alpha}   
  \renewcommand{\th}{\theta}
  \renewcommand{\th}{\theta}
 \newcommand{\s}{\sigma}
 \newcommand{\ord}{\operatorname{ord}}
 \newcommand{\GL}{\operatorname{GL}}
 \newcommand{\Mat}{\operatorname{Mat}}
 \newcommand{\trdeg}{\operatorname{tr.deg}}
 \newcommand{\lcm}{\operatorname{lcm}}
 \newcommand{\Plog}{\operatorname{Plog}}
 \newcommand{\Span}{\operatorname{Span}}
\begin{document}

\title[Difference equations and algebraic independence of zeta values]{Frobenius difference equations and algebraic independence of zeta values in positive equal characteristic}

%    Information for first author
\author{Chieh-Yu Chang}
%    Address of record for the research reported here
\address{National Center for Theoretical Sciences, Mathematics Division,
National Tsing Hua University, Hsinchu City 30042, Taiwan
  R.O.C.}
\address{Department of Mathematics, National Central
  University, Chung-Li 32054, Taiwan R.O.C.}

\email{cychang@math.cts.nthu.edu.tw}

%    \thanks will become a 1st page footnote.
\thanks{The first author was supported by NSC and NCTS. The second author was supported by NSF Grant DMS-0600826.
The third author was supported by NSC Grant No.\ 96-2628-M-007-006.}

%    Information for second author
\author{Matthew A. Papanikolas} \address{Department of Mathematics,
  Texas A${\&}$M University, College Station, TX 77843-3368, USA.}
\email{map@math.tamu.edu}

%    Information for third author
\author{Jing Yu} \address{Department of Mathematics, National Tsing
  Hua University and National Center for Theoretical Sciences, Hsinchu
  City 300, Taiwan R.O.C.}  \email{yu@math.cts.nthu.edu.tw }

%    General info
\subjclass[2000]{Primary 11J93; Secondary 11M38, 11G09}

\date{May 4, 2009}

\keywords{Algebraic independence, Frobenius difference equations,
$t$-motives, zeta values}

\begin{abstract}
  In analogy with the Riemann zeta function at positive integers, for
  each finite field ${\FF}_{p^{r}}$ with fixed characteristic $p$ we
  consider Carlitz zeta values $\zeta_{r}(n)$ at positive integers
  $n$. Our theorem asserts that among the zeta values in
  $\cup_{r=1}^{\infty} \{ \zeta_{r}(1),\zeta_{r}(2),\zeta_{r}(3),\dots
  \}$, all the algebraic relations are those relations within each
  individual family $\{ \zeta_{r}(1),\zeta_{r}(2),\zeta_{r}(3),\dots
  \}$. These are the algebraic relations coming from the Euler-Carlitz
  relations and the Frobenius relations. To prove this, a motivic
  method for extracting algebraic independence results from systems of
  Frobenius difference equations is developed.
\end{abstract}

\maketitle

\section{Introduction}
\subsection{Motivic transcendence theory}
Classically Grothendieck's period conjecture for abelian varieties
predicts that the dimension of the Mumford-Tate group of an abelian
variety over $\overline{\QQ}$ should be equal to the transcendence
degree of the field generated by its period matrix over
$\overline{\QQ}$. Conjecturally the Mumford-Tate group is the
motivic Galois group from Tannakian duality, and therefore
Grothendieck's conjecture provides an interpretation of the
algebraic relations among periods in question by way of motivic
Galois groups.

In this paper we are concerned with the algebraic independence of
certain special values over function fields with varying finite
constant fields in positive equal characteristic. In particular, we
are interested in special zeta values. In the positive
characteristic world, there is the concept of $t$-motives introduced
by Anderson \cite{Anderson86}, dual to the concept of $t$-modules.
Based on work of the third author in the 1990's, the structure of
$t$-modules is the key for proving many interesting linear
independence results about special values in this setting (see
\cite{Yu97}). The breakthrough in passing from linear independence
to algebraic independence by way of $t$-motives began with Anderson,
Brownawell and the second author, and in particular with the linear
independence criterion of \cite{ABP} (the so-called ``ABP
criterion'').

By introducing a Tannakian formalism for rigid analytically trivial
pre-$t$-motives and relating it to the Galois theory of Frobenius
difference equations, the second author \cite{Papanikolas} has shown
that the Galois group of a rigid analytically trivial pre-$t$-motive
is isomorphic to its difference Galois group.  Furthermore, the
second author has successfully used the ABP criterion to show that
the transcendence degree of the field generated by the period matrix
of an ABP motive (i.e. the pre-$t$-motive comes from a uniformizable
abelian $t$-module) is equal to the dimension of its Galois group.
More generally, we say that a rigid analytically trivial
pre-$t$-motive has the {\bf{GP}} (Grothendieck period) property if
the transcendence degree of the field generated by its period matrix
is equal to the dimension of its Galois group (for more details of
terminology, see \S 2).

Using a refined version of the ABP criterion proved by the first
author \cite{Chang}, we observe that there are many pre-$t$-motives
which are not ABP motives but which have the {\bf{GP}} property.
This motivates us to introduce a method of uniformizing the
Frobenius twisting operators with respect to different constant
fields for those pre-$t$-motives that have the {\bf{GP}} property.
The pre-$t$-motive we obtain in this way is defined over a larger
constant field, but still has the {\bf{GP}} property (cf. Corollary
\ref{cor of refined
  ABP}). This technique is very useful when dealing with the problem
of determining all the algebraic relations among various special
values of arithmetic interest in a fixed positive characteristic. It
is used in this paper to study special zeta values. For another
application to special arithmetic gamma values, see \cite{CPTY}.

\subsection{Carlitz zeta values}
 Let $p$ be a
prime, and let ${\FF}_{p^{r}}[\th]$ be the polynomial ring in $\th$
over the finite field with $p^r$ elements.  Our aim is to determine
all the algebraic relations among the following zeta values:
\[
\zeta_{r}(n):=\sum_{
\begin{array}{c}
  a\in {\FF}_{p^{r}}[\th] \\
  a\ \mathrm{monic} \\
\end{array}}\frac{1}{a^{n}}\in {\FF}_{p^{r}}(( 1/\th ))\subseteq
\overline{{\FF}_{p}}(( 1/\th )),
\]
where $r$ and $n$ vary over all positive integers.

The study of these zeta values was initiated in $1935$ by Carlitz
\cite{Carlitz1}. For a fixed positive integer $r$, Carlitz
discovered that there is a constant $\tilde{\pi}_{r}$, algebraic
over ${\FF}_{p^{r}}((\frac{1}{\th}))$, such that $\zeta_{r}(n)/
\tilde{\pi}_{r}^{n}$ lies in ${\FF}_{p}(\th)$ if $n$ is divisible by
$p^{r}-1$.  The quantity $\tilde{\pi}_{r}$ arises as a fundamental
period of the Carlitz ${\FF}_{p^{r}}[t]$-module $C_{r}$, and
Wade~\cite{Wade} showed that $\tilde{\pi}_{r}$ is transcendental
over ${\FF}_{p}(\th)$.

We say that a positive integer $n$ is $(p, r)$-{\bf{even}} if it is
a multiple of $p^{r}-1$.  Thus the situation of Carlitz zeta values
at $(p, r)$-{\bf{even}} positive integers is completely analogous to
that of the Riemann zeta function at even positive integers. For
these $(p, r)$-{\bf{even}} $n$, we call the ${\FF}_{p}(\th)$-linear
relations between $\zeta_{r}(n)$ and ${\tilde{\pi}}_{r}^{n}$ the
{\bf{Euler-Carlitz}} relations. Because the characteristic is
positive, there are also {\bf{Frobenius}} $p$-th power relations
among these zeta values: for positive integers $m,n$,
\[
  \zeta_{r}(p^{m}n)=\zeta_{r}(n)^{p^{m}}.
\]

In the 1990's, Anderson and Thakur \cite{Anderson-Thakur} and
Yu~\cite{Yu91}, \cite{Yu97} made several breakthroughs toward
understanding Carlitz zeta values. Using the $t$-module method, the
transcendence of $\zeta_{r}(n)$ for all positive integers $n$, in
particular for ``\textbf{odd}'' $n$ (i.e., $n$ not divisible by
$p^{r}-1$) was proved and it was also proved that the Euler-Carlitz
relations are the only $\overline{{\FF}_{p}(\th)}$-linear relations
among $\{ \zeta_{r}(n),{\tilde{\pi}}_{r}^{m};\hbox{ }m,n\in
{\NN}\}$. Recently, the first and third authors in \cite{Chang-Yu}
used ABP motives instead of $t$-modules to show that for fixed $r$
the Euler-Carlitz relations and the Frobenius $p$-th power relations
account for all the algebraic relations over $\overline{\FF_p(\th)}$
among the Carlitz zeta values
\[
\tilde{\pi}_{r}, \zeta_{r}(1),\zeta_{r}(2),\zeta_{r}(3),\dots.
\]

To complete the story of Carlitz zeta values, the next natural
question is what happens if $r$ varies.   In 1998,
Denis~\cite{Denis98} proved the algebraic independence of all
fundamental periods
$\{\tilde{\pi}_{1},\tilde{\pi}_{2},\tilde{\pi}_{3},\dots \}$ as the
constant field varies. Thus, in view of  \cite{Chang-Yu}, one
expects that for the bigger set of zeta values,
\[
  \cup_{r=1}^{\infty} \{ \zeta_{r}(1),\zeta_{r}(2),\zeta_{r}(3),\dots \},
\]
the {\bf{Euler-Carlitz}} relations and the {\bf{Frobenius}} $p$-th
power relations still account for all the algebraic relations. This
is indeed the case as we find from the following theorem (stated
subsequently as Corollary~\ref{cor of the main thm}).

\begin{theorem}\label{main thm introduction}
 Given any positive integers $s$ and $d$, the transcendence degree of the
field
$$\overline{{\FF}_{p}(\th)}(\cup_{r=1}^{d}\{
\tilde{\pi}_{r},\zeta_{r}(1), \dots, \zeta_{r}(s)\}) $$
 over $\overline{{\FF}_{p}(\th)}$ is
\[
\sum_{r=1}^{d} \biggl( s-\biggl\lfloor\frac{s}{p} \biggr\rfloor -
\biggl\lfloor \frac{s}{p^{r}-1}\biggr\rfloor+ \biggl\lfloor
\frac{s}{p(p^{r}-1)} \biggr\rfloor+1  \biggr).
\]
\end{theorem}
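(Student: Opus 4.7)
The plan is to prove the theorem by matching upper and lower bounds for the transcendence degree. The upper bound is elementary counting: for each fixed $r$, the $\lfloor s/p \rfloor$ Frobenius $p$-th power relations $\zeta_{r}(p^{m}n) = \zeta_{r}(n)^{p^{m}}$, together with the $\lfloor s/(p^{r}-1) \rfloor$ Euler--Carlitz relations at $(p,r)$-\textbf{even} integers---these two families overlapping in $\lfloor s/(p(p^{r}-1)) \rfloor$ cases by inclusion--exclusion---limit the transcendence degree of $\overline{\FF_{p}(\th)}(\tilde{\pi}_{r}, \zeta_{r}(1), \ldots, \zeta_{r}(s))$ over $\overline{\FF_{p}(\th)}$ to the $r$-th summand in the claimed formula. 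Summing over $r = 1, \ldots, d$ bounds the total transcendence degree by the asserted quantity.

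For the lower bound I would first invoke the Chang--Yu theorem, reproved via the refined ABP criterion of \cite{Chang}, which for each $r$ produces a rigid analytically trivial pre-$t$-motive $M_{r}$ (built from the Anderson--Thakur interpolation of $\zeta_{r}(n)$ and tensor powers of the Carlitz motive $C_{r}$) enjoying the \textbf{GP} property, whose period matrix involves $\tilde{\pi}_{r}$ together with those $\zeta_{r}(n)$, $1 \le n \le s$, not determined by the Euler--Carlitz and Frobenius relations. Since $M_{r}$ is naturally defined using the Frobenius twist $t \mapsto t^{p^{r}}$, I would then apply the uniformization technique of Corollary~\ref{cor of refined ABP} to lift $M_{r}$ to a pre-$t$-motive $\tilde{M}_{r}$ defined with the common twist $t \mapsto t^{p}$, still having the \textbf{GP} property and the same associated period field.

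Form the direct sum $\tilde{M} = \bigoplus_{r=1}^{d} \tilde{M}_{r}$ in this common Tannakian category. Since \textbf{GP} is preserved under direct sums via the formalism of \cite{Papanikolas}, the total transcendence degree equals $\dim \Gamma_{\tilde{M}}$, where the Galois group $\Gamma_{\tilde{M}}$ sits as a closed subgroup of $\prod_{r=1}^{d} \Gamma_{\tilde{M}_{r}}$. The main obstacle is to show that this containment is an equality. I would attack this via a Goursat-style argument: any proper closed subgroup projecting onto each factor would yield a nontrivial common quotient of some pair $\Gamma_{\tilde{M}_{r}}$ and $\Gamma_{\tilde{M}_{r'}}$. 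The maximal torus quotient of each $\Gamma_{\tilde{M}_{r}}$ is a copy of $\GG_{m}$ encoding $\tilde{\pi}_{r}$, so Denis's theorem \cite{Denis98} on algebraic independence of $\{\tilde{\pi}_{1}, \ldots, \tilde{\pi}_{d}\}$ forces the image in $\prod_{r} \GG_{m}$ to be the full product, ruling out common torus quotients. For the unipotent radicals $U_{r}$, which decompose according to the weights of the Carlitz torus acting via $C_{r}$, a careful weight analysis should show that after uniformization the representations arising from different $r$ become incompatible, ruling out any common unipotent quotient and hence forcing $\Gamma_{\tilde{M}} = \prod_{r=1}^{d} \Gamma_{\tilde{M}_{r}}$. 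The resulting dimension count then matches the upper bound, completing the proof.
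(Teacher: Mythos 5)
Your overall strategy is the one the paper follows (per-$r$ fullness from Chang--Yu, uniformization of the Frobenius twist, direct sum, then torus-plus-unipotent analysis of the Galois group of the sum), but there are two concrete problems. First, your uniformization goes in the wrong direction. Each $M_r$ is a module over $\bar{k}(t)[\s^{r},\s^{-r}]$, and there is no canonical way to ``lift'' it to a module with the common twist $\s=\s_p$: one cannot define an action of $\s$ on a $\s^{r}$-difference module, and any induction/restriction-of-scalars substitute would enlarge the period matrix by conjugates $\Omega_r^{(-j)}$ and change the Galois group, so the dimension count would have to be redone. Corollary~\ref{cor of refined ABP}, which you cite, does the opposite: it passes to the $\ell_r$-th \emph{derived} motives with respect to $\s^{\ell}$, $\ell=\lcm(1,\dots,d)$, i.e.\ it uniformizes \emph{upward} to the larger constant field $\FF_{p^{\ell}}$. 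Relatedly, the {\bf GP} property is \emph{not} formally preserved under direct sums in the Tannakian formalism; in the paper it is obtained by re-applying the refined ABP criterion (Theorem~\ref{refined ABP}) to the block-diagonal $\Phi'$, which requires checking the nonvanishing of $\det\Phi'$ at $0$ and at the points $\th^{1/q^{\ell j}}$ --- that verification is exactly the content of the proof of Corollary~\ref{cor of refined ABP}.

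Second, the heart of the lower bound --- ruling out a proper subgroup of $\prod_r\Gamma_{\mathbf{M}_r}$ --- is only asserted in your sketch (``a careful weight analysis should show\dots''). This is precisely what the paper supplies: it isolates the class of motives of type {\bf SV}, defines the pairwise condition of diagonal independence, and proves the dimension criterion Theorem~\ref{main thm}, whose proof works directly with the unipotent radical $V$ as a $T$-stable subspace of the coordinate vector group and rests on the lemma that a $\GG_m^2$-equivariant morphism between vector groups carrying the two distinct weights must vanish. Note also that a literal Goursat reduction to pairs is delicate for $d>2$ and for solvable (non-simple) groups, where common quotients like $\GG_m$ and $\GG_a$ abound; the paper avoids this by the coordinate-subspace argument. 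Your use of Denis's theorem for the torus part is a legitimate alternative (it does give two-dimensionality of the torus for each pair of diagonals once {\bf GP} of the relevant direct sum is in place), whereas the paper instead proves algebraic independence of $\Omega_1,\dots,\Omega_d$ over $\bar{k}(t)$ directly (Lemma~\ref{alg. ind. of Omega}), which reproves Denis's result rather than invoking it. Your upper bound by inclusion--exclusion on the Euler--Carlitz and Frobenius relations is correct and matches the paper's count of $|V_r(s)|+1$ per family.
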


\subsection{Outline}

Our strategy is to construct a pre-$t$-motive which has the
{\bf{GP}} property and whose period matrix accounts for the Carlitz
zeta values in question. In \cite{Chang-Yu} an ABP motive has
already been constructed for Carlitz zeta values with respect to a
fixed constant field. The problem here is one concerning varying the
constant fields in a fixed characteristic, and one has to uniformize
Frobenius powers in order to apply the method developed in
\cite{Papanikolas}.

This paper is organized as follows. In \S 2, we review Papanikolas'
theory and investigate the pre-$t$-motives that have the {\bf{GP}}
property.  Here we introduce the mechanism of uniformizing Frobenius
twisting operators while taking direct sums. Section 3 includes
discussions about rigid analytically trivial pre-$t$-motives of type
{\bf{SV}}, i.e., their Galois groups are extensions of split tori by
vector groups. The heart of this section is Theorem \ref{main thm}
where we determine the dimensions of Galois groups of direct sums of
pre-$t$-motives of type {\bf{SV}}. The pre-$t$-motive for Theorem
\ref{main thm introduction} is constructed in \S 4 and we prove that
it satisfies the conditions of Theorem \ref{main thm}. Finally, we
calculate its dimension explicitly in Theorem \ref{main thm of last
  sec}, which then has Theorem \ref{main thm introduction} as direct
consequence.

\section{$t$-motivic Galois groups}
\subsection{Notation.}
\subsubsection{Table of symbols.}${}$
\\$\mathbb{F}_{p}:=$ the finite field of $p$ elements, $p$ a
prime number.
\\$k:=\mathbb{F}_{p}(\theta):=$ the rational function field in the
variable $\th$ over ${\FF}_{p}$.
\\$k_{\infty}:=\mathbb{F}_{p}((\frac{1}{\theta}))$, completion of
$k$ with respect to the infinite place.
\\$\overline{k_{\infty}}:=$ a fixed algebraic closure of $k_{\infty}$.
\\$\overline{k}:=$ the algebraic closure of $k$ in $\overline{k_{\infty}}$.
\\$\mathbb{C}_{\infty}:=$ completion of $\overline{k_{\infty}}$ with
respect to the canonical extension of the infinite place.
\\$|\cdot|_{\infty}:=$ a fixed absolute value for the completed
field $\mathbb{C}_{\infty}$.
\\$\mathbb{T}:=\{f\in \mathbb{C}_{\infty}[[t]] \mid
\textnormal{$f$ converges on $|t|_{\infty}\leq_{}1$} \}$. This is
known as the Tate algebra.
\\$\mathbb{L}:=$ the fraction field of $\mathbb{T}$.
\\${\GG}_{a}:=$ the additive group.
\\$\GL_r/F:=$ for a field $F$, the $F$-group scheme of invertible $r\times r$ square matrices.
\\${\GG}_{m}:=\GL_{1}$, the multiplicative group.

\subsubsection{Block diagonal matrices} Let $A_{i}\in
\Mat_{m_{i}}(\mathbb{L})$ for $i=1, \dots, n$, and
$m:=m_{1}+\cdots+m_{n}$. We define $\oplus_{i=1}^{n} A_{i} \in
\Mat_m(\mathbb{L})$ to be the canonical block diagonal matrix, i.e.,
the matrix with $A_1, \dots, A_n$ down the diagonal and zeros
elsewhere.
%$$\oplus_{i=1}^{n}
%A_{m_{i}}:= \left[%
%\begin{array}{ccc}
%  A_{m_{1}} & \cdots & 0\\
%  \vdots & \ddots &  \vdots\\
%  0 & \cdots & A_{m_{n}} \\
%\end{array}%
%\right]\in\Mat_{m}(\mathbb{L}).  $$

\subsubsection{Twisting} For $q$ a fixed power of $p$, we let
$\mathbb{F}_q$ denote the field with $q$ elements, and we define an
operator $\s=\s_{q}$ on $\CC_{\infty}$ by $x\mapsto
x^{\frac{1}{q}}$. Then we extend this operator to
$\CC_{\infty}((t))$ as follows. For $n\in \mathbb{Z}$ and a formal
Laurent series $f=\sum_{i} a_{i}t^{i}\in \mathbb{C}_{\infty}((t))$
we define ${\s}^{n}(f):=f^{(-n)}:=\sum_{i}a_{i}^{q^{-n}}t^{i}$. The
operation is an automorphism of the Laurent series field
$\mathbb{C}_{\infty}((t))$ that stabilizes several subrings, e.g.,
$\overline{k}[[t]]$, $\overline{k}[t]$, and $\mathbb{T}$. More
generally, for any matrix $B$ with entries in
$\mathbb{C}_{\infty}((t))$ we define $B^{(-n)}$ by the rule
${B^{(-n)}}_{ij}={B_{ij}}^{(-n)}$.
%In particular, for any matrix $B$
%with coefficients in $\mathbb{C}_{\infty}$ we have
%$\bigl({B^{(-n)}}\bigr)_{ij}=\bigl(B_{ij}\bigr)^{q^{-n}}$.

\subsubsection{Entire power series}
A power series $f=\sum_{i=0}^{\infty}a_{i}t^{i}\in
\mathbb{C}_{\infty}[[t]]$ that satisfies
\[
  \lim_{i\rightarrow \infty}\sqrt[i]{|a_{i}|_{\infty}}=0
\]
and
\[
  [k_{\infty}(a_{0},a_{1},a_{2},\dots):k_{\infty}]< \infty
\]
is called an entire power series. As a function of $t$, such a power
series $f$ converges on all $\mathbb{C}_{\infty}$ and, when
restricted to $\overline{k_{\infty}}$, $f$ takes values in
$\overline{k_{\infty}}$. The ring of the entire power series is
denoted by $\mathbb{E}$.

\subsection{Pre-$t$-motives and the {\bf{GP}} property}
Let $\bar{k}(t)[{\s},{\s}^{-1}]$ be the noncommutative ring of
Laurent polynomials in ${\s}$ with coefficients in $\bar{k}(t)$,
subject to the relation
\[
  {\s}f:=f^{(-1)} {\s} \hbox{ for all }f\in \bar{k}(t).
\]
A pre-$t$-motive $M$ over $\FF_{q}$ is a left
$\bar{k}(t)[\s,{\s}^{-1}]$-module that is finite dimensional over
$\bar{k}(t)$. Letting $\mathbf{m}\in\Mat_{r\times 1}(M)$ be a
$\bar{k}(t)$-basis of $M$, multiplication by $\s$ on $M$ is
represented by
\[
  \s (\mathbf{m})=\Phi \mathbf{m}
\]
for some matrix $\Phi \in \GL_{r}(\bar{k}(t))$. Furthermore, $M$ is
called rigid analytically trivial if there exists $\Psi\in
\GL_{r}(\mathbb{L})$ such that
\[
  \s(\Psi):=\Psi^{(-1)}=\Phi \Psi.
\]
Such a matrix $\Psi$ is called a rigid analytic trivialization of
the matrix $\Phi$. We also say that $\Psi$ is a rigid analytic
trivialization of $M$ (with respect to $\mathbf{m}$). Note that if
$\Psi'\in \GL_{r}(\mathbb{L})$ is also a rigid analytic
trivialization of $\Phi$, then ${\Psi'}^{-1}\Psi \in
\GL_{r}(\FF_{q}(t))$ (cf. {\cite[\S 4.1.6]{Papanikolas}}). Moreover,
if we put $\mathbf{m}':=B \mathbf{m}$ for any fixed $B\in
\GL_{r}(\bar{k}(t))$, then $\Phi':=B^{(-1)}\Phi B^{-1}$ represents
multiplication by $\s$ on $M$ with respect to the $\bar{k}(t)$-basis
$\mathbf{m}'$ of $M$ and $\Psi':=B \Psi$ is a rigid analytic
trivialization of $\Phi'$.

\begin{definition}
  Suppose we are given a rigid analytically trivial pre-$t$-motive $M$
  over $\FF_{q}$ that is of dimension $r$ over $\bar{k}(t)$. If there
  exists a $\bar{k}(t)$-basis $\mathbf{m}\in \Mat_{r \times
    1}(M)$ so that there exists $\Psi\in \GL_{r}(\mathbb{L})\cap
  \Mat_{r}(\mathbb{E})$ which is a rigid analytic trivialization
  of $M$ with respect to $\mathbf{m}$ and satisfies
\[
\trdeg_{\bar{k}(t)}\bar{k}(t)(\Psi) =
\trdeg_{\bar{k}}\bar{k}(\Psi(\th)),
\]
then we say that $M$ has the {\bf{GP}} (Grothendieck period)
property, where $\bar{k}(t)(\Psi)$ (resp. $\bar{k}(\Psi(\th))$) is
the field generated by all entries of $\Psi$ (resp. $\Psi(\th)$)
over $\bar{k}(t)$ (resp. $\bar{k}$).  The {\bf{GP}} property is
independent of the choices of $\Psi$ for a fixed $\mathbf{m}$.
\end{definition}

For any $r\in \NN$, we let $\FF_{q^{r}}$ be the finite field of
$q^{r}$ elements. Given a rigid analytically trivial pre-$t$-motive
$M$ over $\FF_{q}$ with ($\mathbf{m}$, $\Phi$, $\Psi$) as above we
define its $r$-{th} derived pre-$t$-motive $M^{(r)}$ over
$\FF_{q^{r}}$ (with respect to the operator ${\s}^{r}$): the
underlying space of $M^{(r)}$ is the same as $M$, but it is now
regarded as a left $\bar{k}(t)[\s^{r},\s^{-r}]$-module. Letting
\[
\Phi':=\Phi^{(-(r-1))} \cdots \Phi^{(-1)} \Phi,
\]
we have ${\s}^{r}\mathbf{m}=\Phi' \mathbf{m}$ and $\s^{r}\Psi:=
\Psi^{(-r)}=\Phi' \Psi$, and hence $\Psi$ is also a rigid analytic
trivialization of $M^{(r)}$.

\begin{proposition}\label{direct sum of GP motives}
Let $M$ be a rigid analytically trivial pre-$t$-motive over
$\FF_{q}$ which has the {\bf{GP}} property. For any positive integer
$r$, the $r$-th derived pre-$t$-motive $M^{(r)}$ over $\FF_{q^{r}}$
of $M$ is also rigid analytically trivial and has the {\bf{GP}}
property.
\end{proposition}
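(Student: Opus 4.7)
The plan is to show that the very same $\bar{k}(t)$-basis $\mathbf{m}$ and the very same trivialization matrix $\Psi$ that witness the GP property for $M$ also witness it for $M^{(r)}$; the proposition then reduces to unwinding definitions.

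First, since the underlying $\bar{k}(t)$-vector space of $M^{(r)}$ coincides with that of $M$, the tuple $\mathbf{m}$ remains a $\bar{k}(t)$-basis of $M^{(r)}$. Iterating the relation $\Psi^{(-1)} = \Phi \Psi$ yields
\[
\Psi^{(-r)} = \Phi^{(-(r-1))} \Phi^{(-(r-2))} \cdots \Phi^{(-1)} \Phi \cdot \Psi = \Phi' \Psi,
\]
which is precisely the identity expressing that $\Psi$ is a rigid analytic trivialization of $M^{(r)}$ with respect to $\mathbf{m}$, once one remembers that the twisting operator attached to the constant field $\FF_{q^r}$ is $\s_{q^r} = \s_q^r$. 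In particular $M^{(r)}$ is rigid analytically trivial. This step is essentially already recorded in the discussion preceding the statement of the proposition.

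Second, I verify the transcendence-degree equality. The integrality condition $\Psi \in \GL_r(\mathbb{L}) \cap \Mat_r(\mathbb{E})$ is unchanged, and the required equality
\[
\trdeg_{\bar{k}(t)} \bar{k}(t)(\Psi) = \trdeg_{\bar{k}} \bar{k}(\Psi(\th))
\]
is a statement purely about the entries of the fixed matrix $\Psi$, the ambient fields $\bar{k}(t)$ and $\bar{k}$, and specialization at $t = \th$. None of these data depend on whether we regard the module structure of $M$ via the action of $\s_q$ or only via the action of $\s_q^r$. Therefore the hypothesis that $(\mathbf{m}, \Psi)$ witnesses the GP property for $M$ immediately yields the same conclusion for $M^{(r)}$.

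There is no serious obstacle: the content of the proposition is merely that the GP property is intrinsic to the chosen rigid analytic trivialization as a matrix in $\GL_r(\mathbb{L}) \cap \Mat_r(\mathbb{E})$, and hence is insensitive to passing to the coarser twisting operator $\s^r$ (equivalently, enlarging the constant field from $\FF_q$ to $\FF_{q^r}$).
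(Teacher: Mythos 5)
Your proposal is correct and matches the paper's intent: the paper proves the rigid analytic triviality of $M^{(r)}$ in the discussion immediately preceding the proposition (via $\Psi^{(-r)}=\Phi'\Psi$ with the same $\Psi$), and the transfer of the {\bf GP} property is then immediate because the defining transcendence-degree equality depends only on the matrix $\Psi$ and not on whether one twists by $\s$ or $\s^{r}$. Nothing further is needed.
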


Following the work of \cite{Papanikolas}, one can show:
\begin{theorem}{\rm{(Chang \cite[Thm.~1.2]{Chang},
Papanikolas \cite[Thm.~5.2.2]{Papanikolas})}} \label{refined ABP}
Suppose $\Phi\in \Mat_{r}(\bar{k}[t])$ defines a rigid analytically
trivial pre-$t$-motive $M$ over $\FF_{q}$  with a rigid analytic
trivialization $\Psi\in \Mat_{r}(\mathbb{T})\cap
\GL_{r}(\mathbb{L})$. If $\det \Phi(0)\neq 0$ and $\det
\Phi(\th^{\frac{1}{q^{i}}})\neq 0$ for all $i=1,2,3,\ldots$, then
$M$ has the {\bf{GP}} property.
\end{theorem}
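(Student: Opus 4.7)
The plan is to deduce the \textbf{GP} property by combining the Galois-theoretic identification from \cite{Papanikolas} with the refined linear Anderson-Brownawell-Papanikolas criterion of \cite{Chang}. Set $d:=\trdeg_{\bar{k}(t)}\bar{k}(t)(\Psi)$; by the main theorem of \cite{Papanikolas} this equals $\dim\Gamma_{M}$, where $\Gamma_{M}$ is the motivic Galois group attached to $(M,\Psi)$. The easy direction,
$$\trdeg_{\bar{k}}\bar{k}(\Psi(\theta))\leq d,$$
follows because $\Psi\in\Mat_{r}(\mathbb{T})$ guarantees a well-defined specialization at $t=\theta$, and any $f\in \bar{k}(t)[X_{ij}]$ vanishing on $\Psi$ specializes (after clearing denominators, which the condition $\det\Phi(0)\neq 0$ helps control) to a polynomial in $\bar{k}[X_{ij}]$ vanishing on $\Psi(\theta)$. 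So the content of \textbf{GP} is the reverse inequality.

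For the reverse inequality I would show that every polynomial relation over $\bar{k}$ among the entries of $\Psi(\theta)$ lifts to a polynomial relation over $\bar{k}(t)$ among the entries of $\Psi$. A relation of degree $n$ is a $\bar{k}$-linear relation among the degree-$n$ monomials in the entries of $\Psi(\theta)$. These monomials appear as entries of the matrix $\Psi^{\otimes n}$, which is a rigid analytic trivialization of the $n$-th tensor power $M^{\otimes n}$, represented by $\Phi^{\otimes n}$. Crucially, the zero locus of $\det(\Phi^{\otimes n})$ equals that of $\det\Phi$, so the hypotheses $\det\Phi(0)\neq 0$ and $\det\Phi(\theta^{1/q^{i}})\neq 0$ are inherited by $\Phi^{\otimes n}$. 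Applying the refined linear ABP criterion to $M^{\otimes n}$ then produces a $\bar{k}(t)$-linear relation among entries of $\Psi^{\otimes n}$, i.e., a $\bar{k}(t)$-polynomial relation among entries of $\Psi$ specializing to the given one at $t=\theta$. Translating back via the identification $d=\dim\Gamma_{M}$ gives $\trdeg_{\bar{k}}\bar{k}(\Psi(\theta))\geq d$, completing \textbf{GP}.

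The main obstacle is the refined linear criterion itself, which is the content of \cite[Thm.~1.2]{Chang}. The original ABP criterion required $\Psi\in\Mat_{r}(\mathbb{E})$ entire and $\det\Phi$ to vanish only at $t=\theta$; here one must allow $\Psi\in\Mat_{r}(\mathbb{T})\cap\GL_{r}(\mathbb{L})$ and weaken the determinantal hypothesis to $\det\Phi(0)\neq 0$ together with $\det\Phi(\theta^{1/q^{i}})\neq 0$ for all $i\geq 1$. The strategy is to rerun the interpolation argument at the heart of ABP: one constructs a sequence of approximating polynomial relation vectors by twisting a hypothesized relation via $\s$ and using $\Psi^{(-1)}=\Phi\Psi$ to pull it back. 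The nonvanishing of $\det\Phi(\theta^{1/q^{i}})$ is precisely what is required to invert the Frobenius-twisted relation at each step without introducing poles at the preimages $\theta^{1/q^{i}}$, while $\det\Phi(0)\neq 0$ prevents the iterated process from diverging on the closed unit disk underlying $\mathbb{T}$. The delicate convergence bookkeeping for this iteration, which upgrades the original ABP interpolation to work under the weaker hypotheses, is the analytic heart of the argument and the main obstacle.
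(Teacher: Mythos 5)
Note first that the paper does not actually prove this statement: it is quoted from \cite{Chang} and \cite{Papanikolas}, and the only argumentative content the paper adds is the remark immediately afterwards that $\det\Phi(0)\neq 0$ forces $\Psi\in\Mat_{r}(\EE)$ by \cite[Prop.~3.1.3]{ABP}. Your outline does follow the intended route (refined linear criterion applied to tensor constructions, as in \S 5.2 of \cite{Papanikolas}), but as written it has one wrong justification and one real gap. The wrong justification is in your ``easy direction'': membership of $\Psi$ in $\Mat_{r}(\TT)$ does \emph{not} give a well-defined specialization at $t=\theta$, since elements of the Tate algebra converge only on $|t|_{\infty}\leq 1$ while $|\theta|_{\infty}>1$. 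The hypothesis $\det\Phi(0)\neq 0$ is not there to ``control denominators''; its role is precisely to upgrade $\Psi$ to a matrix of entire functions via \cite[Prop.~3.1.3]{ABP}, which is what makes $\Psi(\theta)$ meaningful---this is exactly the paper's remark. Also, when specializing a functional relation you must first divide its coefficients by the largest power of $(t-\theta)$ they share, so that the specialized polynomial over $\bar{k}$ is nonzero.

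The gap is in the hard direction. From ``every polynomial relation over $\bar{k}$ among the entries of $\Psi(\theta)$ lifts to a relation over $\bar{k}(t)$ among the entries of $\Psi$ specializing to it'' you cannot conclude $\trdeg_{\bar{k}}\bar{k}(\Psi(\theta))\geq d$ merely by ``translating back via the identification $d=\dim\Gamma_{M}$''; that identification plays no role in this implication. What is needed---and what the proof of \cite[Thm.~5.2.2]{Papanikolas} actually supplies---is a counting step: for each degree $n$, $\bar{k}$-linearly independent relations of degree at most $n$ satisfied by $\Psi(\theta)$ lift to $\bar{k}(t)$-linearly independent relations of degree at most $n$ satisfied by $\Psi$ (linear independence being preserved because the lifts specialize back to the originals), so the Hilbert functions of the two coordinate rings compare and the transcendence degrees satisfy the desired inequality; alternatively one invokes the torsor structure of $Z_{\Psi}$. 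Two smaller repairs to your tensor-power step: a degree-$n$ relation is not homogeneous, so you must work with $\oplus_{j=0}^{n}\Phi^{\otimes j}$ (including the trivial block), and the linear criterion of \cite{Chang} applies to column vectors, so you should stack the columns of the Kronecker powers into one vector; both moves are harmless because the determinants involved are constants times powers of $\det\Phi$, so your hypotheses persist. Your closing sketch of how the refined linear criterion itself is proved is only heuristic, but since the statement is in any case attributed to \cite{Chang} and \cite{Papanikolas}, that ingredient may simply be cited rather than reproved.
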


Note that by {\cite[Prop.~3.1.3]{ABP}} the condition $\det
\Phi(0)\neq 0$ implies $\Psi\in \Mat_{r}(\mathbb{E})$. Combining
Theorem \ref{refined ABP} and Proposition \ref{direct sum of GP
motives} we have:

\begin{corollary}\label{cor of refined ABP}
  Given an integer $d \geq 2$, we let $\ell:=\lcm(1,\ldots,d)$. For
  each $1\leq i \leq d$, let $\ell_{i}:=\frac{\ell}{i}$ and let
  $\Phi_{i}\in \Mat_{r_{i}}(\bar{k}[t])\cap \GL_{r_{i}}(\bar{k}(t))$
  define a pre-$t$-motive $M_{i}$ over $\FF_{q^{i}}$ with a rigid
  analytic trivialization $\Psi_{i}\in \Mat_{r_{i}}(\mathbb{T})\cap
  \GL_{r_{i}}(\mathbb{L})$.  Suppose that each $\Phi_{i}$ satisfies
  the hypotheses of Theorem \ref{refined ABP} for $i=1,\ldots,d$.
  Then the direct sum
\[
M:= \oplus_{i=1}^{d} {M_{i}}^{(\ell_{i})}
\]
is a rigid analytically trivial pre-$t$-motive over $\FF_{q^{\ell}}$
that has the {\bf{GP}} property.
\end{corollary}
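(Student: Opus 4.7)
The plan is to apply Theorem \ref{refined ABP} directly to the direct-sum data $(\Phi, \Psi)$ viewed as defining a pre-$t$-motive over the larger constant field $\FF_{q^\ell}$, bypassing any attempt to combine the GP properties of the individual summands.

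First I would set up the pre-$t$-motive structure on $M$. Since $i \cdot \ell_i = \ell$ for every $i$, each derived piece $M_i^{(\ell_i)}$ is naturally a pre-$t$-motive over $\FF_{q^\ell}$ on which multiplication by $\sigma_{q^\ell}$ is represented by
\[
  \Phi_i' := \Phi_i^{(-(\ell_i-1)i)} \cdots \Phi_i^{(-i)} \Phi_i \in \Mat_{r_i}(\bar{k}[t]) \cap \GL_{r_i}(\bar{k}(t)),
\]
while $\Psi_i$ remains a rigid analytic trivialization, now satisfying $\Psi_i^{(-\ell)} = \Phi_i' \Psi_i$. Putting $\Phi := \oplus_{i=1}^d \Phi_i'$, $\Psi := \oplus_{i=1}^d \Psi_i$, and $r := r_1 + \cdots + r_d$ realizes $M$ as a rigid analytically trivial pre-$t$-motive over $\FF_{q^\ell}$ with $\Phi \in \Mat_r(\bar{k}[t])$ and $\Psi \in \Mat_r(\TT) \cap \GL_r(\mathbb{L})$.

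Next, to apply Theorem \ref{refined ABP} in the $\FF_{q^\ell}$ setting it suffices to verify $\det \Phi(0) \neq 0$ and $\det \Phi(\theta^{1/q^{\ell j}}) \neq 0$ for every $j \geq 1$. Block-diagonality reduces this to the same statements for each $\det \Phi_i' = \prod_{n=0}^{\ell_i - 1}(\det \Phi_i)^{(-ni)}$. Using the characteristic-$p$ identity $f^{(-m)}(\alpha) = f(\alpha^{q^m})^{1/q^m}$, I would evaluate each factor as $(\det \Phi_i)^{(-ni)}(0) = \det \Phi_i(0)^{1/q^{ni}}$ and $(\det \Phi_i)^{(-ni)}(\theta^{1/q^{\ell j}}) = \det \Phi_i(\theta^{1/q^{\ell j - ni}})^{1/q^{ni}}$. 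The identity $\ell = i\ell_i$ gives $\ell j - ni = i(\ell_i j - n)$, and the range $0 \leq n \leq \ell_i - 1$ forces $\ell_i j - n \geq 1$ for $j \geq 1$; consequently every relevant evaluation of $\det \Phi_i$ occurs either at $0$ or at $\theta^{1/q^{im}}$ with $m \geq 1$, where the hypothesis on $\Phi_i$ supplied by Theorem \ref{refined ABP} delivers non-vanishing. Theorem \ref{refined ABP}, applied to $M$ over $\FF_{q^\ell}$, then yields the GP property.

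No deep technical obstacle lurks here: the argument is essentially bookkeeping with Frobenius twists and the multiplicativity of block-diagonal determinants. The main conceptual point is the decision to invoke Theorem \ref{refined ABP} directly at the level $\FF_{q^\ell}$ rather than through Proposition \ref{direct sum of GP motives} applied to each summand; the latter route would additionally require that the GP property is stable under direct sums, which is not transparent from the definition since transcendence degrees of unions are only subadditive.
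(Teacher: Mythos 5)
Your proposal is correct and follows essentially the same route as the paper: realize $M$ via the block matrices $\Phi' = \oplus_i \Phi_i'$, $\Psi' = \oplus_i \Psi_i$ with $\Psi'^{(-\ell)} = \Phi'\Psi'$, and apply Theorem \ref{refined ABP} directly over $\FF_{q^{\ell}}$ by checking $\det\Phi'(0)\neq 0$ and $\det\Phi'(\theta^{1/q^{\ell j}})\neq 0$, reducing via the twist-evaluation identity to the hypotheses on each $\Phi_i$ (the paper phrases this last step as a contradiction using $\det\Phi_i(\theta^{(-(\ell j - im))})=0$ with $i \mid (\ell j - im) > 0$, which is the same computation as your $\ell j - ni = i(\ell_i j - n) \geq i$). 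Your explicit exponents $\Phi_i^{(-(\ell_i-1)i)}\cdots\Phi_i^{(-i)}\Phi_i$ are the correct reading of the paper's formula.
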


\begin{proof}
For each $1\leq i \leq d$, we define
$$ \Phi_{i}':=\Phi_{i}^{(-(\ell_{i}-1))}\cdots\Phi_{i}^{(-i)}\Phi_{i} .  $$
Moreover, if we define
$$  \Phi':=\oplus_{i=1}^{d} \Phi_{i}',\quad \Psi':=\oplus_{i=1}^{d} \Psi_{i}, $$
then we have $$\Psi'^{(-\ell)}=\Phi' \Psi'.   $$ Note that the
matrix representing multiplication by $\s^{\ell}$ on $M$ with
respect to the evident $\bar{k}(t)$-basis is given by  $\Phi'$.

Our task is to show that $\Phi'$ satisfies the hypotheses of Theorem
\ref{refined ABP} (with respect to the operator $\s^{\ell}$), whence
the result. It is obvious that $\det \Phi'(0) \neq 0 $ since $\det
\Phi_{i}(0)\neq 0$ for each $1\leq i \leq d$. Suppose that $\det
\Phi' (\th^{\frac{1}{q^{\ell j}}} )=0 $ for some $j\in \NN$. This
implies that there exists $1\leq i \leq d$ and $0\leq m \leq
\ell_{i}-1$ so that
\[
\det\Phi_{i}^{(-im)}(\th^{(-\ell j ) }) =0.
\]
However, this is equivalent to
\begin{equation}\label{vanishing of det Phi i}
 \det \Phi_{i}(\th^{( -( \ell j -im)   )  }  ) =0.
\end{equation}
Since $0\leq m\leq \ell_{i}-1 $ and $i| \ell$, we have that $(\ell
j-im)>0 $ and $i | ( \ell j-im )  $. Thus, (\ref{vanishing of det
Phi i}) contradicts the hypothesis that $\det \Phi_{i}(\th ^{(-i h)}
)\neq 0\hbox{ for all }h=1,2,3,\ldots$.
\end{proof}

\subsection{Difference Galois groups and transcendence}
In this section, we review the related theory developed in
\cite{Papanikolas}. The category of pre-$t$-motives over $\FF_{q}$
forms a rigid abelian ${\FF}_{q}(t)$-linear tensor category.
Moreover, the category $\mathcal{R}$ of rigid analytically trivial
pre-$t$-motives over $\FF_{q}$ forms a neutral Tannakian category
over ${\FF}_{q}(t)$. Given an object $M$ in $\mathcal{R}$, we let
$\mathcal{T}_{M}$ be the strictly full Tannakian subcategory of
$\mathcal{R}$ generated by $M$. That is, $\mathcal{T}_{M}$ consists
of all objects of $\mathcal{R}$ isomorphic to subquotients of finite
direct sums of
\[
M^{\otimes u}\otimes (M^{\vee})^{\otimes v} \hbox{ for various }u,
v,
\]
where $M^{\vee}$ is the dual of $M$.  By Tannakian duality,
$\mathcal{T}_{M}$ is representable by an affine algebraic group
scheme $\Gamma_{M}$ over ${\FF}_{q}(t)$. The group $\Gamma_{M}$ is
called the Galois group of $M$ and it is described explicitly as
follows.

Suppose that $\Phi\in \GL_{r}(\bar{k}(t))$ provides multiplication
by $\s$ on $M$ with respect to a fixed basis $\mathbf{m}\in
\Mat_{r\times 1}(M)$ over $\bar{k}(t)$. Let $\Psi\in
\GL_{r}(\mathbb{L})$ be a rigid analytic trivialization for $\Phi$.
Let $X:=(X_{ij})$ be an $r\times r$ matrix whose entries are
independent variables $X_{ij}$, and define a
$\overline{k}(t)$-algebra homomorphism $\nu :
\overline{k}(t)[X,1/\det X] \to \mathbb{L}$ so that $\nu(X_{ij}) =
\Psi_{ij}$.  We let
\begin{align*}
\Sigma_\Psi &:= \mathrm{im}\ \nu = \overline{k}(t)[\Psi,1/\det \Psi]
\subseteq \mathbb{L}, \\
Z_\Psi &:= \mathrm{Spec}\ \Sigma_{\Psi}.
\end{align*}
Then $Z_\Psi$ is a closed $\overline{k}(t)$-subscheme of
$\mathrm{GL}_{r}/\overline{k}(t)$.  Let $\Psi_1$, $\Psi_2 \in
\mathrm{GL}_r(\mathbb{L} \otimes_{\overline{k}(t)} \mathbb{L})$ be
the matrices satisfying $(\Psi_1)_{ij} = \Psi_{ij} \otimes 1$ and
$(\Psi_2)_{ij} = 1 \otimes \Psi_{ij}$.  Let $\widetilde{\Psi} :=
\Psi_1^{-1}\Psi_2$.  We have an $\FF_{q}(t)$-algebra homomorphism
$\mu : \FF_{q}(t)[X,1/\det X] \to \mathbb{L}
\otimes_{\overline{k}(t)} \mathbb{L}$ so that $\mu(X_{ij}) =
\widetilde{\Psi}_{ij}$. Furthermore, we define
\begin{equation} \label{Gamma Psi}
\begin{aligned}
\Delta &:= \mathrm{im}\ \mu, \\
\Gamma_\Psi &:= \mathrm{Spec}\ \Delta.
\end{aligned}
\end{equation}
The following theorem is proved in \cite{Papanikolas}.

\begin{theorem} \label{Galois theory} {\rm (Papanikolas
    \cite[Thm.~4.2.11, 4.3.1, 4.5.10]{Papanikolas})} The scheme $\Gamma_\Psi$ is a closed
  $\FF_{q}(t)$-subgroup scheme of $\mathrm{GL}_{r} /
  \FF_{q}(t)$, which is isomorphic to the Galois group
  $\Gamma_M$ over $\FF_{q}(t)$.  Moreover $\Gamma_\Psi$ has the
  following properties:
\begin{enumerate}
\item[(a)] $\Gamma_\Psi$ is smooth over $\overline{\FF_{q}(t)}$ and
is
  geometrically connected.
\item[(b)] $\dim \Gamma_\Psi = \mathrm{tr.deg}_{\overline{k}(t)}\
  \overline{k}(t)(\Psi)$.
\item[(c)] $Z_\Psi$ is a $\Gamma_\Psi$-torsor over $\overline{k}(t)$.
\end{enumerate}
In particular, if $M$ has the {\bf{GP}} property, then one has
\begin{enumerate}
\item[(d)] $\dim \Gamma_{\Psi}=\mathrm{tr.deg}_{\bar{k}}\
  \bar{k}(\Psi(\th))$.
\end{enumerate}
\end{theorem}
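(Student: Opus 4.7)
The plan is to treat $\Sigma_\Psi$ as the Picard-Vessiot ring for the Frobenius difference equation $\s\Psi = \Phi\Psi$ over $\bar{k}(t)$, whose field of $\s$-invariants is $\FF_q(t)$, and to deduce all four assertions from the general theory of such extensions. The four parts will be addressed in a natural order: first realize $\Gamma_\Psi$ as an affine group scheme, then exhibit $Z_\Psi$ as a torsor, then read off the dimension formula, and finally invoke the {\bf{GP}} hypothesis to obtain (d).

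The first step is to verify that $\Delta$ carries a natural Hopf algebra structure over $\FF_q(t)$. The key computation is that $\widetilde{\Psi}=\Psi_1^{-1}\Psi_2$ is fixed by the diagonal $\s$-action on $\mathbb{L}\otimes_{\bar{k}(t)}\mathbb{L}$: since $\s\Psi = \Phi\Psi$, the same left factor $\Phi$ is produced for both $\Psi_1$ and $\Psi_2$ and cancels in $\Psi_1^{-1}\Psi_2$. Granting that the $\s$-invariants of $\mathbb{L}\otimes_{\bar{k}(t)}\mathbb{L}$ coincide with $\FF_q(t)$ (which should follow from faithfully flat descent, since the $\s$-fixed subfield of $\mathbb{L}$ is exactly $\FF_q(t)$), the entries of $\widetilde{\Psi}$ generate an $\FF_q(t)$-subalgebra $\Delta$. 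The block identity $\widetilde{\Psi}_{12}\,\widetilde{\Psi}_{23}=\widetilde{\Psi}_{13}$ on the triple tensor product provides the comultiplication, matrix inversion the antipode, and evaluation at the diagonal the counit, making $\Gamma_\Psi$ an affine group scheme acting on $Z_\Psi$ by right translation.

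Next I would establish (c) by exhibiting the isomorphism
\[
Z_\Psi \times_{\bar{k}(t)} \bigl(\Gamma_\Psi \times_{\FF_q(t)} \bar{k}(t)\bigr)\;\xrightarrow{\sim}\; Z_\Psi \times_{\bar{k}(t)} Z_\Psi,
\]
whose ring-theoretic content is the identity $\Psi_2 = \Psi_1\cdot\widetilde{\Psi}$ in $\GL_r(\mathbb{L}\otimes_{\bar{k}(t)}\mathbb{L})$. Since $\Sigma_\Psi\subseteq\mathbb{L}$ is automatically an integral domain, $Z_\Psi$ is integral, and the torsor relation forces $\dim\Gamma_\Psi = \dim Z_\Psi = \trdeg_{\bar{k}(t)}\bar{k}(t)(\Psi)$, which is (b). Part (d) is then immediate: the {\bf{GP}} hypothesis is precisely the equality $\trdeg_{\bar{k}(t)}\bar{k}(t)(\Psi) = \trdeg_{\bar{k}}\bar{k}(\Psi(\th))$, so combining with (b) gives the conclusion.

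The main obstacle I anticipate is part (a). Geometric connectedness should reduce to showing that $\bar{k}(t)$ is algebraically closed in the fraction field of $\Sigma_\Psi$; an element of $\mathbb{L}$ algebraic over $\bar{k}(t)$ has only finitely many conjugates, and a $\s$-orbit argument should force it to lie in $\bar{k}(t)$ itself. Smoothness is the genuinely delicate point, since Cartier's theorem is unavailable over the imperfect field $\FF_q(t)$; instead one must argue directly that $\Delta$ is geometrically reduced, after which smoothness follows from the standard fact that a geometrically reduced group scheme of finite type over a field is smooth. The natural route is to show that $\Sigma_\Psi \otimes_{\bar{k}(t)} \overline{\bar{k}(t)}$ remains an integral domain and is geometrically reduced, and then transfer this property from $Z_\Psi$ to $\Gamma_\Psi$ by faithfully flat descent along the torsor projection of (c).
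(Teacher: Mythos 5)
This theorem is not proved in the paper at hand; it is imported wholesale from Papanikolas \cite{Papanikolas} (Thms.\ 4.2.11, 4.3.1, 4.5.10), and your sketch is, at the level of strategy, indeed the Picard--Vessiot-style argument developed there. However, the sketch contains a false intermediate claim and leaves precisely the hard parts as gaps.

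The false claim is that the $\s$-invariants of $\mathbb{L}\otimes_{\bar{k}(t)}\mathbb{L}$ coincide with $\FF_q(t)$. Taken literally, this would immediately collapse the whole construction: you have just shown that the entries $\widetilde{\Psi}_{ij}$ are $\s$-invariant, so if the invariant ring were $\FF_q(t)$, then every $\widetilde{\Psi}_{ij}$ would lie in $\FF_q(t)$, giving $\Delta=\FF_q(t)$ and $\Gamma_\Psi=\{1\}$. A concrete counterexample: if $f\in\mathbb{L}$ is transcendental over $\bar{k}(t)$ and satisfies $\s(f)=af$ with $a\in\bar{k}(t)^\times$ (e.g.\ $f=\Omega_r$ with $a=t-\th$ in the appropriate setup), then $f\otimes f^{-1}$ is $\s$-invariant but is not a scalar multiple of $1\otimes1$. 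The true and needed lemma is $\mathbb{L}^\s=\FF_q(t)$, which is \cite[Lem.~3.3.2]{Papanikolas}; the correct role of the $\s$-invariants of the tensor product is exactly that they contain $\Delta$, which is the coordinate ring you are trying to build. Your subsequent sentence (that the entries of $\widetilde\Psi$ generate an $\FF_q(t)$-subalgebra) is trivially true and does not actually use the erroneous claim, but the misstatement signals a conceptual slip.

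Beyond that, the sketch leaves three genuine gaps. First, the torsor property (c): the identity $\Psi_2=\Psi_1\widetilde{\Psi}$ only defines the comorphism $\Sigma_\Psi\otimes_{\bar{k}(t)}\Sigma_\Psi\to\Sigma_\Psi\otimes_{\FF_q(t)}\Delta$; it does not show bijectivity. In Papanikolas (following van der Put--Singer) the isomorphism rests on a $\s$-simplicity/reducedness argument for $\Sigma_\Psi$ (or a suitable localization), which you do not mention. Second, for (a) you correctly identify that Cartier's theorem is unavailable over the imperfect field $\FF_q(t)$ and that the right route is geometric reducedness, but showing $\Sigma_\Psi\otimes_{\bar{k}(t)}\overline{\bar{k}(t)}$ is reduced is exactly the delicate technical core of \cite[Thm.~4.3.1]{Papanikolas}, and the sketch offers no mechanism for it. Third, the assertion that $\Gamma_\Psi$ is isomorphic to the Tannakian Galois group $\Gamma_M$ (which is \cite[Thm.~4.5.10]{Papanikolas}, and which is what ultimately makes the theorem useful for the transcendence applications in this paper) is not addressed at all. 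The deductions of (b) from (c), and of (d) from (b) together with the definition of the {\bf{GP}} property, are fine.
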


We call $\Gamma_{\Psi}$ the Galois group associated to the
difference equation $\Psi^{(-\ell)}=\Phi \Psi$. This $\Gamma_{\Psi}$
is independent of the analytic trivialization $\Psi$, up to
isomorphism over ${\FF}_{q}(t)$,   Throughout this paper we always
identify $\Gamma_{M}$ with $\Gamma_{\Psi}$, and
 regard it as a linear algebraic
group over ${\FF}_{q}(t)$ because of Theorem \ref{Galois theory}(a).

\begin{remark}\label{remark surjection as projection}
  Let $r_{1},r_{2}$ be positive integers and ${\bf{0}}:=0_{r_{1}\times
    r_{2}}$ the zero matrix of size ${r_1}\times{r_{2}}$. Suppose
  that the matrix $$\Phi:= \left[%
\begin{array}{cc}
  \Phi_{1} & {\bf{0}} \\
  \Phi_{3} & \Phi_{2} \\
\end{array}
\right] \in \GL_{r_{1}+r_{2}}(\bar{k}(t))$$ defines a rigid
analytically trivial pre-$t$-motive $M$. Then one can always find
its rigid analytic trivialization of the form
$$\Psi:=\left[
\begin{array}{cc}
  \Psi_{1} & {\bf{0}} \\
  \Psi_{3} & \Psi_{2} \\
\end{array}
\right]\in \GL_{r_{1}+r_{2}}(\mathbb{L}).$$ By \eqref{Gamma Psi}, we
have that $$ \Gamma_{\Psi}\subseteq \left\{\left[
\begin{array}{cc}
  * & {\bf{0}} \\
  * & * \\
\end{array}
\right] \right\}\subseteq {\mathrm{GL}_{r_{1}+r_{2}/ \FF_{q}(t)}}.$$
Let $N$ be the  sub-pre-$t$-motive of $M$ defined by
$\Phi_{1}\in\GL_{r_{1}}(\bar{k}(t))$ with rigid analytic
trivialization $\Psi_{1}$, then by the Tannakian theory we have a
natural surjective morphism
\begin{equation}\label{surjection as projection}
\begin{array}{cccc}
  \pi: & \Gamma_{\Psi}(\overline{\FF_{q}(t)}) & \twoheadrightarrow & \Gamma_{\Psi_{1}}(\overline{\FF_{q}(t)}). \\
  & \gamma & \mapsto & \pi(\gamma) \\
\end{array}
\end{equation}
In fact, $\pi(\gamma)$ comes from the restriction of the action of
$\gamma$ to the fiber functor of $\mathcal{T}_{N}$ (which is a full
subcategory of $\mathcal{T}_{M}$). Precisely, $\pi(\gamma)$ is the
matrix cut out from the upper left square of $\gamma$ with size
$r_{1}$ (for detailed arguments, see \cite[\S 6.2.2]{Papanikolas}).
\end{remark}

\section{A dimension criterion}
\subsection{Pre-$t$-motives of type {\bf{SV}} }\label{Ai,Fi}

Let $\{n_{1},\ldots,n_{h} \}$ be $h$ non-negative integers. We say
that a  pre-$t$-motive $M$ over $\FF_{q}$ is of type {\bf{SV}} (its
Galois group being an extension of a split torus by a vector group)
if it is defined by $\Phi$ of the form
$$\Phi:=\oplus_{i=1}^{h} A_{i} ,\quad
  A_{i}:=\left[%
\begin{array}{cccc}
  a_{i} & 0 & \cdots & 0 \\
  a_{i1} & 1 & \cdots & 0 \\
  \vdots & \vdots & \ddots & \vdots \\
  a_{in_{i}} & 0 & \cdots & 1 \\
\end{array}%
\right]\in \GL_{1+n_{i}}(\bar{k}(t)),$$ which has rigid analytically
trivialization
$$\Psi:=\oplus_{i=1}^{h} F_{i},\quad F_{i}:=\left[%
\begin{array}{cccc}
  f_{i} & 0 & \cdots & 0 \\
  f_{i1} & 1 & \cdots & 0 \\
  \vdots & \vdots & \ddots & \vdots \\
  f_{in_{i}} & 0 & \cdots & 1 \\
\end{array}%
\right]\in \GL_{1+n_{i}}(\mathbb{L}).$$ The  entries $f_i,
i=1,\dots, h$, shall be called the  diagonals of the rigid analytic
trivialization $\Psi$.

Let $T$ be the Galois group associated to the difference equation
$$ \left[%
\begin{array}{ccc}
  f_{1} & \cdots & 0 \\
  \vdots & \ddots & \vdots \\
   0& \cdots & f_{h} \\
\end{array}%
\right]^{(-1)}=\left[%
\begin{array}{ccc}
  a_{1} &\cdots  & 0 \\
   \vdots& \ddots & \vdots \\
  0 & \cdots & a_{h} \\
\end{array}%
\right] \left[%
\begin{array}{ccc}
  f_{1} & \cdots & 0 \\
  \vdots & \ddots &\vdots  \\
  0 & \cdots & f_{h} \\
\end{array}%
\right] $$ and note that by \eqref{Gamma Psi}, $T$ is a subtorus of
the $h$ dimensional split torus in $\GL_{h}/ \FF_{q}(t)$. By the
same reason as \eqref{surjection as projection}, we have the
following natural projection of  Galois groups
\begin{equation}\label{projection to torus}
\Gamma_{\Psi}\twoheadrightarrow T
\end{equation} given in terms of coordinates by $$ \oplus_{i=1}^{h} \left[%
\begin{array}{cccc}
  x_{i} & 0 & \cdots & 0 \\
  x_{i1} & 1 & \cdots & 0 \\
  \vdots & \vdots & \ddots & \vdots \\
  x_{in_{i}} & 0 & \cdots & 1 \\
\end{array}%
\right] \mapsto [x_{1}]\oplus \dots\oplus [x_{h}].$$ One has also
the following exact sequence of linear algebraic groups
\begin{equation}\label{S.E.S.}
1\rightarrow V \rightarrow \Gamma_{\Psi} \twoheadrightarrow T
\rightarrow 1 ,
\end{equation}
where $V$ is a vector group contained in the
$(\sum_{i=1}^{h}n_{i})$-dimensional \lq\lq coordinate\rq\rq vector
group $G$ which contains all elements of the form
$$\oplus_{i=1}^{h}
\left[%
\begin{array}{cccc}
  1 & 0 & \cdots & 0 \\
  x_{i1} & 1 & \cdots & 0 \\
  \vdots & \vdots & \ddots & \vdots \\
  x_{in_{i}} & 0 & \cdots & 1 \\
\end{array}%
\right].
$$
This subgroup  $V$ is the unipotent radical of its (solvable) Galois
group $\Gamma_\Psi$, and $\dim \Gamma_{\Psi}=\dim V + \dim T$.
\begin{definition}
Let $M$ be a rigid analytically trivial pre-$t$-motive of {\bf SV}
type given as above. We say that its Galois group $\Gamma_{\Psi}$ is
\emph{full} if $\dim V=\sum_{i=1}^{h}n_{i}$, i.e., $V=G$.
\end{definition}

\subsection{Criterion for direct sum motives to have full Galois group}
We continue with the notation of \S\ref{Ai,Fi}.  For each $i$,
$1\leq i\leq h$, the $1\times 1$ matrix $[a_{i}]$ defines a
sub-pre-$t$-motive of $M$ over $\FF_{q}$, which is one-dimensional
over $\bar{k}(t)$. Its rigid analytic trivialization is given by the
diagonal $f_{i}$ satisfying $[f_{i}]^{(-1)}=[a_{i}][f_{i}]$. We call
one such sub-pre-$t$-motive a \emph{diagonal} of the pre-$t$-motive
$M$. Note that by Theorem \ref{Galois theory} the Galois group of a
diagonal is $\GG_{m}$ if and only if the corresponding
trivialization $f_{i}$ is transcendental over $\bar{k}(t)$. In this
situation, the canonical projection $T\rightarrow \GG_{m}$ on the
$i$th coordinate of $T$ is surjective.

Note that there is a canonical action of $T$ on $G$, which induces
an action of $T$ on $V$ compatible with the one coming from
\eqref{S.E.S.}. Hence, we have that given any element $\gamma\in V$
whose $x_{ij}$-coordinate is nonzero, the orbit inside $V$ given by
the action of $T$ on $\gamma$ must be infinite if $f_{i}$ is
transcendental over $\bar{k}(t)$.

\begin{definition}
Let $M_{r}$ be a rigid analytically trivial pre-$t$-motive of type
{\rm{\bf{SV}}} for $r=1,\ldots,d$. We say that this set of
pre-$t$-motives $\{ M_{r} \}_{r=1}^{d} $ is diagonally independent
if for any $1\leq i,j\leq d$, $i\neq j$, the Galois group of
$N_{i}\oplus N_{j}$ is a two-dimensional torus over $\FF_{q}(t)$,
where $N_{i}$ (resp. $N_{j}$) is any diagonal of $M_{i}$ (resp.
$M_{j}$).

\end{definition}

\begin{theorem}\label{main thm}
Let $M_{1},\ldots,M_{d}$ be rigid analytically trivial
pre-$t$-motives over $\FF_{q}$ of type {\rm{\bf{SV}}} with full
Galois groups. Suppose that the set $\{ M_{r} \}_{r=1}^{d}$ is
diagonally independent. Putting $M:=\oplus_{r=1}^{d}M_{r}$, then the
Galois group of $M$ is also full.
\end{theorem}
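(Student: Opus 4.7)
The plan is to prove $V = G$ by decomposing both as $T$-modules and invoking diagonal independence to separate the $T$-weights carried by the coordinate vector groups of distinct $M_{r}$. Write $\Gamma := \Gamma_{\Psi}$ for the ambient Galois group of $M$, with exact sequence $1 \to V \to \Gamma \to T \to 1$ and $V \subseteq G := \prod_{r=1}^{d} G_{r}$. By Remark \ref{remark surjection as projection}, the Tannakian projection $\Gamma \twoheadrightarrow \Gamma_{\Psi_{r}}$ restricts on unipotent radicals and on tori to surjections $V \twoheadrightarrow V_{r}$ and $T \twoheadrightarrow T_{r}$; fullness of each $M_{r}$ gives $V_{r} = G_{r}$, so $V$ projects onto every factor of $G$ and $T$ projects onto every $T_{r}$.

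I would then analyse the $T$-action on $G$. A block calculation with the matrices of \S\ref{Ai,Fi} shows that $T$ acts on the coordinate $x_{ij}^{(r)}$ via the character $(\chi_{i}^{(r)})^{-1}|_{T}$, where $\chi_{i}^{(r)}$ denotes the $i$-th coordinate character of $T_{r} \subseteq \GG_{m}^{h_{r}}$. Since $T$ is a split torus, $G = \bigoplus_{\chi} G^{\chi}$ and $V = \bigoplus_{\chi} V^{\chi}$ with $V^{\chi} \subseteq G^{\chi}$; the projection $G \twoheadrightarrow G_{r}$ is $T$-equivariant, so combined with the first step, $V^{\chi}$ surjects onto $G_{r}^{\chi}$ for every $\chi$ and every $r$.

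The heart of the argument, and the step I expect to be the main obstacle, is a character-separation claim: for $r \neq r'$ and arbitrary $i, i'$, the characters $\chi_{i}^{(r)}|_{T}$ and $\chi_{i'}^{(r')}|_{T}$ are multiplicatively independent on $T$, hence in particular distinct. To prove this, let $N_{i}^{(r)}, N_{i'}^{(r')}$ be the two diagonals singled out by these characters. Diagonal independence asserts that the Galois group $T''$ of $N_{i}^{(r)} \oplus N_{i'}^{(r')}$ is a two-dimensional torus inside $\GG_{m}^{2}$, hence equals $\GG_{m}^{2}$; since $T''$ is a torus quotient of $\Gamma$, the surjection $\Gamma \twoheadrightarrow T''$ kills $V$ and therefore factors through $T$. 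Under this factorization, $\chi_{i}^{(r)}|_{T}$ and $\chi_{i'}^{(r')}|_{T}$ are identified with the pullbacks of the two coordinate characters of $\GG_{m}^{2}$, and the surjectivity of $T \twoheadrightarrow \GG_{m}^{2}$ forces them to be independent on $T$.

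With the character separation in hand, every nonzero $T$-weight space $G^{\chi}$ is concentrated in a single factor $G_{r(\chi)}$, so $V^{\chi} \twoheadrightarrow G_{r(\chi)}^{\chi} = G^{\chi}$ combined with $V^{\chi} \subseteq G^{\chi}$ forces $V^{\chi} = G^{\chi}$. Summing over $\chi$ yields $V = G$, as desired. Apart from the character-separation step, the argument is routine book-keeping about semisimple representations of tori once the Tannakian surjections and the block structure of type {\bf{SV}} are in place.
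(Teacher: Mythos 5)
Your argument is correct, and it reaches the same crux as the paper by a somewhat different route. The paper (after reducing to $d=2$) chooses a linear coordinate subspace $W$ with $W\cap V=\{0\}$, shows the coordinate projection onto the complementary subspace $W'$ is an isomorphism on $V$, composes with the Tannakian surjection onto $\Gamma_{\Psi_1}$ to get a surjective $T$-morphism $W'\to G_1$, and then kills the $G_2$-blocks of this morphism using its Lemma on $\GG_m^2$-morphisms between vector groups carrying the two independent coordinate weights; this forces $G_1,G_2\subseteq W'$, hence $W'=G=V$. You instead decompose $G$ and $V$ as modules over the split torus $T$ into weight spaces, prove the character-separation statement directly (the surjection $\Gamma\twoheadrightarrow\Gamma_{N_i^{(r)}\oplus N_{i'}^{(r')}}\cong\GG_m^2$ kills $V$, factors through $T$, and surjectivity forces multiplicative independence of the two diagonal characters), and then use fullness plus the surjections $V^{\chi}\twoheadrightarrow G_r^{\chi}$ to get $V^{\chi}=G^{\chi}$ weight by weight. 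The underlying mechanism is identical -- diagonal independence separates the $T$-weights of distinct factors, which is exactly what the paper's $\GG_m^2$-morphism lemma encodes -- but your bookkeeping buys two things: it treats arbitrary $d$ uniformly without the ``without loss of generality $d=2$'' reduction, and it avoids the choice of the complement $W$ and the torsor-style isomorphism $V\cong W'$, replacing them with the standard semisimplicity of representations of diagonalizable groups. Two small points you should make explicit to be fully rigorous, though both are also used tacitly in the paper: the identification of the Tannakian surjection onto the Galois group of $N_i^{(r)}\oplus N_{i'}^{(r')}$ with extraction of the two corresponding diagonal entries is a permuted-block variant of Remark \ref{remark surjection as projection}; and the claim that $\Gamma\twoheadrightarrow\Gamma_{\Psi_r}$ carries $V$ onto $V_r$ needs the (easy, standard) observation that for a surjection of connected solvable groups the image of the unipotent radical is unipotent and normal with torus quotient, hence equals the unipotent radical of the target.
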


\begin{proof}
  Without loss of generality, we may assume $d=2$ since the following
  argument generalizes easily for arbitrary $d\geq 2$.  Let the
  pre-$t$-motive $M_i$ be defined by a matrix $\Phi_i$, for $i=1, 2$,
  with $\Psi=\Psi_i\oplus \Psi_2$ a rigid analytic trivialization of
  $M$, and let $\Gamma_{\Psi}$, $\Gamma_{\Psi_1}$, $\Gamma_{\Psi_2}$
  be the Galois groups. The unipotent radicals of these groups are
  denoted by $V$, $V_1$, $V_2$ respectively. Let $G$ (resp.\ $G_i$,
  $i=1, 2$) be the coordinate vector group containing $V$ (resp.\
  $V_i$).  Suppose the matrix $\Phi_1$ has $h$ diagonal blocks, and
  let the coordinates of $G_1$ be denoted by $x_{ij}$, $i=1, \ldots,
  h$, $j=1, \dots, n_i$. Similarly let $y_{ij}$, $i=1, \ldots, \ell$,
  $j=1, \dots, m_\ell$ denote the coordinates of $G_2$. Any subspace
  $W\subseteq G$ obtained by setting some of these coordinates to $0$ is
  called a linear coordinate subspace. The hypothesis that the Galois
  group of $M_i$ is full means exactly that $G_i=V_i$, for $i=1,
  2$. We are going to prove that $G=V$.

  Suppose $V$ has codimension $r$ in the coordinate vector group
  $G$. We can find linear coordinate subspace $W\subseteq G$ of
  dimension $r$ such that $W\cap V$ is of dimension $0$.  Since $W\cap
  V$ is invariant under $T$, it must be $\{0\}$, because the hypothesis
  that $M_1$ and $M_2$ are diagonally independent implies in
  particular that all diagonals in the analytic trivialization $\Psi$
  are transcendental over $\bar{k}(t)$.

  Let $W'\subseteq G$ be the linear coordinate space given by those
  coordinates disjoint from those of $W$. Then the natural projection from
  $G$ to $W'$ induces on $V$ an isomorphism of vector groups.
  Composing the inverse of this isomorphism with the surjective
  morphism $\pi_{1}$ in the following diagram
$$\CD 1@>>>V @>>>\Gamma_{\Psi} @>>> T @>>>1\\
@.     @VV{\pi_{1}}V  @VV{\pi_{1}}V @VVV\\
1@>>>G_1 @>>>\Gamma_{\Psi_1}  @>>> T_1 @>>>1\endCD$$ we obtain a
morphism $\pi_1$ from $W'$ onto $G_1$ which is furthermore a
$T$-morphism.

We contend that under the hypothesis that $M_1$ and $M_2$ are
diagonally independent, $\pi_1$ maps $G_2\cap W'$ to zero. This
contention results from the following basic lemma by taking any
diagonal $N_{1}$ (resp. $N_{2}$) of $M_1$ (resp. $M_2$) and consider
the restriction of the above morphism $\pi_1$ to a single block.
Now since $G=G_1\times G_2$, it follows that $\pi_1(G_1\cap W')=
G_1$. Thus $G_1\subseteq W'$. Similarly we also have $G_2\subseteq
W'$, and hence $G=W'$ and $G=V$.
\end{proof}

\begin{lemma}
Let $G_{1}$ {\rm{(resp.}} $G_{2}${\rm{)}} be the vector group with
coordinates
$$\left[%
\begin{array}{cccc}
  1 & 0 & \cdots & 0 \\
  x_{1} & 1 & \cdots & 0 \\
  \vdots & \vdots & \ddots & \vdots \\
  x_{n} & 0 & \cdots & 1 \\
\end{array}%
\right],\quad
\left(\text{resp.}\ \left[%
\begin{array}{cccc}
  1 & 0 & \cdots & 0 \\
  y_{1} & 1 & \cdots & 0 \\
  \vdots & \vdots & \ddots & \vdots \\
  y_{m} & 0 & \cdots & 1 \\
\end{array}%
\right]\right).
$$
Let $\GG_m^2$ act on $G_{1}$ (resp.\ $G_{2}$)  by
$$\GG_m^2\owns(x, y) : x_i\mapsto x x_i , 1\le i\le n,  \quad
(\text{resp.}\ y_j\mapsto y y_j, 1\le j\le m).$$ If $\pi_1 :
G_{2}\rightarrow G_{1}$ is a $\GG_m^2$-morphism, then $\pi_1\equiv
0$.\end{lemma}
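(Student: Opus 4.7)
The plan is to exploit the structural mismatch between the $\GG_m^2$-action on the source $G_2$ and on the target $G_1$: the action on $G_2$ factors through the second projection $(x,y)\mapsto y$, whereas the action on $G_1$ factors through the first projection $(x,y)\mapsto x$. Consequently, equivariance of $\pi_1$ will pit two independent scalings against each other, and only the zero morphism can satisfy both simultaneously.

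Concretely, I would fix an arbitrary point $v$ of $G_2$ and write $\pi_1(v) = (a_1, \ldots, a_n)$ in the coordinates of $G_1$. Applying the equivariance identity
\[
  \pi_1\bigl((x,y)\cdot v\bigr) = (x,y)\cdot \pi_1(v)
\]
with the specialization $(x,y) = (s,1)$ for $s \in \GG_m$, the left-hand side collapses to $\pi_1(v)$ because $(s,1)$ acts trivially on $G_2$, while the right-hand side becomes $(s a_1, \ldots, s a_n)$. Hence $(a_1, \ldots, a_n) = (s a_1, \ldots, s a_n)$ must hold for every $s \in \GG_m$, which forces each $a_i=0$. Since $v$ was arbitrary, this gives $\pi_1 \equiv 0$.

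I do not anticipate any real obstacle here: the conclusion is a direct consequence of the character-weight mismatch between source and target under the prescribed $\GG_m^2$-actions, and the computation above is essentially a one-liner. The only mild pedantry required is to observe that it is immaterial whether \emph{$\GG_m^2$-morphism} is interpreted as an equivariant morphism of algebraic groups or merely of varieties — in either case $\pi_1$ is given componentwise by regular functions on $G_2$, and the specialization above kills the image at every geometric point.
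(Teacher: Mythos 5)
Your proof is correct, and it is surely the argument the authors had in mind: the paper states this lemma without proof, treating the character--weight mismatch as evident. Your specialization to $(s,1)$, which acts trivially on $G_{2}$ but scales $G_{1}$, immediately forces every coordinate of $\pi_1(v)$ to vanish for each point $v$, and since the base field is infinite and the argument never assumes $\pi_1$ is linear, it also covers the characteristic-$p$ subtlety that homomorphisms of vector groups may involve Frobenius twists.
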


\section{Application to zeta values} \label{AppZeta}

\subsection{Carlitz theory}
Throughout \S\ref{AppZeta} we fix $q:=p$ and $\s=\s_{p}:\sum
a_{i}t^{i}\mapsto \sum a_{i}^{\frac{1}{p}}t^{i}$ for $\sum
a_{i}t^{i}\in \CC_{\infty}((t))$.  Also, we henceforth let $r$ be a
fixed positive integer. Recall the Carlitz
${\FF}_{p^{r}}[t]$-module, denoted by $C_{r}$, which is given by the
following ${\FF}_{p^{r}}$-linear ring homomorphism:
\[
  C_{r} = (t\mapsto (x\mapsto \th x+x^{p^{r}})) :  {\FF}_{p^{r}}[t] \rightarrow
 \mathrm{End}_{{\FF}_{p^{r}}}({\GG}_{a}).
\]
Note that when we regard $C_{r}$ as a Drinfeld
${\FF}_{p}[t]$-module, it is of rank $r$ (see \cite{Goss},
\cite{Thakur}). One has the Carlitz exponential associated to
$C_{r}$:
\[
\exp_{C_{r}}(z):=\sum_{i=0}^{\infty} \frac{z^{p^{ri}}}{D_{ri}}.
\]
Here we set
\[
\begin{array}{rl}
  D_{r0} &:=1,  \\
  D_{ri} &:=\prod_{j=0}^{i-1}(\th^{p^{ri}}-\th^{p^{rj}}  ), \quad i\geq 1.  \\
\end{array}
\]
Now $\exp_{C_{r}}(z)$ is an entire power series in $z$ satisfying
the functional equation
$$ \exp_{C_{r}}(\th z)=\th \exp_{C_{r}}(z)+\exp_{C_{r}}(z)^{p^{r}} .$$
Moreover one has the product expansion
\[
\exp_{C_{r}}(z)=z \prod_{0 \neq a \in {\FF}_{p^{r}}[\th]} \biggl(
1-\frac{z}{a \tilde{\pi}_{r}}  \biggr),
\]
where
\[
  \tilde{\pi}_{r}=\th (- \th)^{\frac{1}{p^{r}-1}}
  \prod_{i=1}^{\infty}\Bigl(1-\th^{1-p^{ri} }\Bigr)^{-1}
\]
is a fundamental period of $C_{r}$. Throughout this paper we will
fix a choice of $(-\th )^{\frac{1}{p^{r}-1}}$ so that
$\tilde{\pi}_{r}$ is well-defined element in
$\overline{{\FF}_{p}((\frac{1}{\th} )) }$.  We also choose these
roots in a compatible way so that when $r \mid r'$ the number $(-\th
)^{\frac{1}{p^{r}-1}}$ is a power of $(-\th )^{\frac{1}{p^{r'}-1}}$.

The formal inverse of $\exp_{C_{r}}(z)$ is the Carlitz logarithm
$\log_{C_{r}}(z)$, and as a power series in $z$, one has
$$\log_{C_{r}}(z)=\sum_{i=0}^{\infty}\frac{z^{p^{ri}}}{L_{ri}}, $$
where $$
\begin{array}{l}
   L_{r0}:=1, \\
   L_{ri}:=\prod_{j=1}^{i}({\th}-{\th}^{p^{rj}}). \\
\end{array}   $$As a function in $z$, $\log_{C_{r}}(z)$ converges for all
$z\in {\CC}_{\infty}$ with $|z|_{\infty}<
|\th|_{\infty}^{\frac{p^{r} }{p^{r}-1 } }$. It satisfies the
functional equation
$$\th \log_{C_{r}}(z)=\log_{C_{r}}(\th z)+\log_{C_{r}}(z^{p^{r}})$$
whenever the values in question are defined.

For a positive integer $n$, the $n$-th Carlitz polylogarithm
associated to $C_{r}$ is the series
\begin{equation}\label{def of polylog}
\Plog_{rn}(z):= \sum_{i=0}^{\infty} \frac{z^{p^{ri}}}{L_{ri}^{n} }
\end{equation}
 which converges $\infty$-adically
for all $z\in {\CC}_{\infty}$ with $|z|_{\infty}<
|\th|_{\infty}^{\frac{np^{r}}{p^{r}-1}}$. Its value at a particular
$z=\a\neq 0$ is called the $n$-th polylogarithm of $\a$ associated
to $C_{r}$. In transcendence theory we are interested in those
polylogarithms of $\a \in \bar{k}^{\times}$, as analogous to
classical logarithms of algebraic numbers.

\subsection{Algebraic independence of special functions} For any
positive integer $r$, we
 let
\[
\Omega_{r}(t):=(-{\th})^{\frac{-p^{r}}{p^{r}-1}}\prod_{i=1}^{\infty}
\biggl(1-\frac{t}{{\th}^{p^{ri}}} \biggr)\in
\overline{k_{\infty}}[[t]]\subseteq \mathbb{C}_{\infty}((t)).
\]
One checks that $\Omega_{r}\in \mathbb{E}$. Furthermore,
$\Omega_{r}$ satisfies the functional equation
\begin{equation}\label{fun equ of Omega r}
\Omega_{r}^{(-r)}(t)=(t-\th)\Omega_{r}(t)
\end{equation}
and its specialization at $t=\th$ gives
$\Omega_{r}(\th)=-1/\tilde{\pi}_{r}.$

By \eqref{fun equ of Omega r}, the function $\Omega_{r}$ provides a
rigid analytic trivialization of the Carlitz motive
$\mathcal{C}_{r}$ over $\FF_{p^{r}}$ that has the {\bf{GP}} property
(cf.\ Theorem \ref{refined ABP}). This is the pre-$t$-motive with
underlying space $\bar{k}(t)$ itself and $\s^{r}$ acts by
$\s^{r}f=f^{(-r)}$ for $f\in \mathcal{C}_{r}$.

For any $d\in \NN$, we let $\ell:= \lcm(1,\ldots,d)$ and
$\ell_{r}:=\frac{\ell}{r}$ for $r=1,\ldots,d$. Let
$\mathcal{C}_{r}^{(\ell_{r})}$ be the $\ell_{r}$-th derived
pre-$t$-motive over $\FF_{p^{\ell}}$ of $\mathcal{C}_{r}$, we
consider the direct sum
$M=M_{d}:=\oplus_{r=1}^{d}\mathcal{C}_{r}^{(\ell_{r})}$. By
Corollary \ref{cor of refined ABP}, $M$ also has the {\bf{GP}}
property. We note that the canonical rigid analytical trivialization
of $M$ is the diagonal matrix $\Psi\in \Mat_{d}(\mathbb{E})\cap
\GL_{d}(\mathbb{L})$ with diagonal entries
$\Omega_{1},\ldots,\Omega_{d}$.

\begin{lemma}\label{alg. ind. of Omega}
Given any positive integer $d\geq 2$, let $M=M_{d}$ be the rigid
analytically trivial pre-$t$-motive with rigid analytic
trivialization $\Psi$ defined as above. Then we have $\dim
\Gamma_{\Psi}=d$. In particular, the functions
 $\Omega_{1},\ldots,\Omega_{d} $ are algebraically independent over
$\bar{k}(t)$ and the values $\tilde{\pi}_{1},\ldots,\tilde{\pi}_{d}$
are algebraically independent over $\bar{k}$.
\end{lemma}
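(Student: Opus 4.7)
The plan is to compute $\Gamma_\Psi$ directly and show it equals the full diagonal torus $\GG_m^d/\FF_p(t)$. Granted this, $\dim\Gamma_\Psi = d$ is automatic; algebraic independence of $\Omega_1,\dots,\Omega_d$ over $\bar{k}(t)$ then follows from Theorem \ref{Galois theory}(b), while that of $\tilde{\pi}_1,\dots,\tilde{\pi}_d$ over $\bar{k}$ follows from Theorem \ref{Galois theory}(d) (whose hypothesis holds by Corollary \ref{cor of refined ABP}) combined with $\Omega_r(\th) = -1/\tilde{\pi}_r$. Since $\Psi$ and its iterated twist $\Phi' = \oplus_r \Phi_r'$ with $\Phi_r' := \prod_{j=0}^{\ell_r-1}(t-\th^{p^{-rj}})$ are both diagonal, \eqref{Gamma Psi} forces $\Gamma_\Psi \subseteq \GG_m^d \subseteq \GL_d/\FF_p(t)$, and Theorem \ref{Galois theory}(a) makes $\Gamma_\Psi$ connected and smooth, hence a subtorus.

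Arguing by contradiction, I would assume that a nontrivial character $(n_1,\dots,n_d)\in\ZZ^d\setminus\{0\}$ of $\GG_m^d$ vanishes on $\Gamma_\Psi$. Since $\widetilde{\Psi}_{rr} = \Omega_r^{-1}\otimes\Omega_r$ in the diagonal case, this translates via \eqref{Gamma Psi} into $f^{-1}\otimes f = 1$ in $\mathbb{L}\otimes_{\bar{k}(t)}\mathbb{L}$, where $f := \prod_r \Omega_r^{n_r}$, and hence $f\in\bar{k}(t)^\times$ (since $a\otimes 1 = 1\otimes a$ iff $a\in\bar{k}(t)$). Applying $\s^\ell$ and using the iterated functional equation $\Omega_r^{(-\ell)} = \Phi_r'\cdot\Omega_r$ converts this monomial relation into the rational identity
\[
\frac{f^{(-\ell)}}{f} \;=\; \prod_{r=1}^d (\Phi_r')^{n_r} \quad\text{in } \bar{k}(t)^\times.
\]

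The heart of the argument is then a valuation analysis on the Frobenius orbit $\{\th^{p^{-s}}\}_{s\in\ZZ}$, whose members are pairwise distinct because $\th$ is transcendental over $\FF_p$. Writing $v_s$ for the valuation at $t = \th^{p^{-s}}$, direct computation gives $v_s(f^{(-\ell)}) = v_{s-\ell}(f)$, while $v_s(\Phi_r') = 1$ precisely when $r\mid s$ and $0\le s\le \ell - r$. Because $f$ has finite-support divisor and the right-hand side of the identity has all its divisor inside $s\in\{0,\dots,\ell-1\}$, telescoping within each residue class modulo $\ell$ forces $v_s(f) = 0$ for every $s\in\ZZ$, and the identity collapses to the linear system
\[
\sum_{\substack{1\le r\le d \\ r\mid s,\; s\le\ell-r}} n_r \;=\; 0 \qquad (s = 0, 1, \dots, \ell-1).
\]
For $d\ge 3$ one has $\ell = \lcm(1,\dots,d)\ge 2d$, so the rows $s = 1, 2, \dots, d$ form a lower-triangular system in $(n_1,\dots,n_d)$, yielding $n_r = 0$ by induction on $r$; the case $d=2$ is finished directly by $s = 1$ (giving $n_1 = 0$) and $s = 0$ (giving $n_1 + n_2 = 0$). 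The main obstacle is this combinatorial closure, specifically the clipping condition $s\le\ell-r$ coming from the support of $\Phi_r'$ that degenerates exactly when $\ell$ is small relative to $d$; the divisibility structure of $\ell = \lcm(1,\dots,d)$ leaves just enough room in $[0,\ell-1]$ for the induction to succeed, with the edge case $\ell = d = 2$ requiring the separate use of $s = 0$.
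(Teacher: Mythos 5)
Your proposal is correct, and its overall skeleton coincides with the paper's: reduce to $\Gamma_\Psi$ being a subtorus of the diagonal torus (note the base field is $\FF_{p^{\ell}}(t)$, not $\FF_p(t)$ --- a harmless slip), use a putative nontrivial vanishing character to produce, via \eqref{Gamma Psi} and the fact that $a\otimes 1=1\otimes a$ forces $a\in\bar{k}(t)$, a monomial relation $f=\prod_r\Omega_r^{n_r}\in\bar{k}(t)^{\times}$, and then derive a contradiction from orders of vanishing along the Frobenius orbit of $\th$. Where you diverge is in how the contradiction is extracted. The paper uses the explicit product expansion of $\Omega_r$, whose zero set is $\{\th^{p^{rj}}\}_{j\ge 1}$: since $\beta=\prod_r\Omega_r^{m_r}$ is rational it has order $0$ at $\th^{p^{h}}$ for $h\gg 0$, and evaluating at $\th^{p^{p'}}$ for a large prime $p'>d$ isolates $m_1=0$ (only $r=1$ divides $p'$), after which the argument iterates through the remaining exponents. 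You instead never invoke the zero locus of $\Omega_r$: you twist by $\s^{\ell}$, use only the functional equation to get $f^{(-\ell)}/f=\prod_r(\Phi_r')^{n_r}$ in $\bar{k}(t)^{\times}$, and run a divisor computation at the backward orbit points $\th^{p^{-s}}$; your propagation argument (RHS supported in $0\le s\le\ell-1$, so $v_s(f)=v_{s-\ell}(f)$ outside that window, hence $v_s(f)\equiv 0$ by finite support) is sound, and the resulting system $\sum_{r\mid s,\,r\le \ell-s}n_r=0$ is indeed solvable by induction over divisors, using $\ell=\lcm(1,\dots,d)\ge d(d-1)\ge 2d$ for $d\ge 3$ to kill the clipping condition, with your separate $d=2$ check correct. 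The paper's route is shorter and needs no combinatorics; yours is more self-contained on the analytic side (only the difference equation and nonvanishing of $\Omega_r$ are used, not the product formula), at the cost of the extra bookkeeping with $\ell$ and the edge case.
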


\begin{proof}
Suppose $\dim \Gamma_{\Psi}< d$. Since $\Psi$ is a diagonal matrix
with diagonal entries $\Omega_{1},\ldots,\Omega_{d}$, by
\eqref{Gamma Psi} we have that $\Gamma_{\Psi}\subseteq \mathrm{T}$,
where $\mathrm{T}$ is the split torus of dimension $d$ in $\GL_{d}/
{\FF_{p^{\ell}}}(t)$. We let $X_{1},\dots,X_{d}$ be the coordinates
of $\mathrm{T}$ and $\chi_{j}$ the character of $\mathrm{T}$ which
projects the $j$-th diagonal position to ${\GG}_{m}$. Note that
$\{\chi_{j} \}_{j=1}^{d}$ generates the character group of
$\mathrm{T}$. Hence $\Gamma_{\Psi}$ is the kernel of some characters
of $\mathrm{T}$, i.e., canonical generators of the defining ideal
for $\Gamma_{\Psi}$ can be of the form $X_{1}^{m_{1}}\cdots
X_{d}^{m_{d}}-1$ for some integers $m_{1},\dots,m_{d}$, not all
zero. By \eqref{Gamma Psi} we have that
$$ (\Omega_{1}^{-m_{1}}\ldots \Omega_{d}^{-m_{d}} )\otimes (\Omega_{1}^{m_{1}}\ldots \Omega_{d}^{m_{d}})=1 \in \mathbb{L}\otimes_{\bar{k}(t)}\mathbb{L}  , $$
and hence
\begin{equation}\label{beta I}
\beta:= \Omega_{1}^{m_{1}}\ldots \Omega_{d}^{m_{d}}\in
\bar{k}(t)^{\times}.
\end{equation}
We recall that $\Omega_{r}$ has zeros on $\{ {\th}^{p^{rj}}
\}_{j=1}^{\infty}$.  Since $\beta\in \bar{k}(t)^{\times}$, it has
only finitely many zeros and poles, and hence
$\ord_{t={\th}^{p^{h}}}(\beta)=0$ for $h\gg 0$. Choosing a prime
number $p'$ sufficiently large, so that the following conditions
hold:
\begin{itemize}
\item $p'> d$;
\item the order of vanishing of $\beta$ at $t={\th}^{p^{p' }}$ is zero.
\end{itemize}
These conditions imply that $m_{1}=0$ from \eqref{beta I}. Iterating
this argument, we conclude that $m_{1}=\cdots=m_{d}=0$, whence a
contradiction.
\end{proof}

\subsection{Algebraic independence of polylogarithms.}
Given $n\in {\NN}$ and ${\a}\in \bar{k}^{\times}$ with
$|{\a}|_{\infty}<|{\th}|_{\infty}^{\frac{np^{r}}{p^{r}-1}}$, we
consider the power series
$$L_{{\a},rn}(t):={\a}+\sum_{i=1}^{\infty}\frac{{\a}^{p^{ri}}}{(t-{\th}^{p^{r}})^{n} \cdots (t-{\th}^{p^{ri}})^{n} },$$
which as a function on $\mathbb{C}_{\infty}$ converges on
$|t|_{\infty}<|{\th}|_{\infty}^{p^{r}} $. We note that
$L_{{\a},rn}({\th})$ is exactly the $n$-th polylogarithm of ${\a}$
associated to $C_{r}$, i.e.,
$$  L_{{\a},rn}({\th})=\Plog_{rn}(\a). $$
Given a collection of such numbers $\alpha$, say $\alpha_1, \dots,
\alpha_m$, we define
$$\Phi_{rn}({\a}_{1}, \dots, {\a}_{m}):=\left[%
\begin{array}{cccc}
  (t-{\th})^{n} & 0 & \cdots & 0 \\
  {\a}_{1}^{(-r)}(t-{\th})^{n} & 1 & \cdots & 0 \\
  \vdots & \vdots & \ddots & \vdots \\
  {\a}_{m}^{(-r)}(t-{\th})^{n} & 0 & \cdots & 1 \\
\end{array}%
\right]\in \GL_{m+1}(\bar{k}(t))\cap \Mat_{m+1}(\bar{k}[t])
$$ and
$$
 \Psi_{rn}({\a}_{1}, \dots, {\a}_{m}):=\left[%
\begin{array}{cccc}
  \Omega_{r}^{n} & 0 & \cdots & 0 \\
  \Omega_{r}^{n}L_{{\a}_{1},rn} & 1 & \cdots & 0 \\
  \vdots & \vdots & \ddots & \vdots \\
  \Omega_{r}^{n}L_{{\a}_{m},rn} & 0 & \cdots & 1 \\
\end{array}%
\right]\in \GL_{m+1}(\mathbb{L})\cap \Mat_{m+1}(\mathbb{E}).
$$
Then one has
\begin{equation}\label{functional equation of tildePsi(rn)}
\Psi_{rn}({\a}_{1}, \dots, {\a}_{m})^{(-r)}=\Phi_{rn}({\a}_{1},
\dots, {\a}_{m}) \Psi_{rn}({\a}_{1}, \dots, {\a}_{m}) \quad
(\hbox{cf.{\cite[\S 3.1.2]{Chang-Yu}}}).
\end{equation}
Hence, $\Phi_{rn}({\a}_{1}, \dots, {\a}_{m})$ defines a rigid
analytically trivial pre-$t$-motive over $\FF_{p^{r}}$ that has the
{\bf{GP}} property.

In \cite{Chang-Yu}, we followed Papanikolas' methods to generalize
the algebraic independence of Carlitz logarithms to algebraic
independence of polylogarithms. Precisely, by \cite[Thm.
3.1]{Chang-Yu} and Theorem \ref{refined ABP} we have:

\begin{theorem}{\rm{(Chang-Yu, \cite[Thm. 3.1]{Chang-Yu})}}
\label{tr.deg of polylog} Given any positive integers $r$ and $n$,
let ${\a}_{1}, \dots, {\a}_{m}\in \bar{k}^{\times}$ satisfy
$|{\a}_{i}|_{\infty}<|{\th}|_{\infty}^{\frac{np^{r}}{p^{r}-1}}$ for
$i=1, \dots, m$.  Then
\begin{align*}
 \dim_{{\FF}_{p^{r}}({\th})}N_{rn}  &= \trdeg_{\bar{k}}
\bar{k}(\tilde{\pi}_{r}^{n},L_{{\a}_{1},rn}({\th}), \dots,
L_{{\a}_{m},rn}({\th})) \\
   &= \trdeg_{\bar{k}(t)}
\bar{k}(t)(\Omega_{r}^{n},L_{{\a}_{1},rn}, \dots, L_{{\a}_{m},rn}),
\end{align*}
where
\[
N_{rn}:={\FF}_{p^{r}}({\th})\hbox{-}
\Span\{\tilde{\pi}_{r}^{n},L_{{\a}_{1},rn}({\th}), \dots,
L_{{\a}_{m},rn}({\th})\}.
\]
\end{theorem}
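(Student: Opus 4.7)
The strategy is to interpret both sides as invariants of the pre-$t$-motive $M_{rn}$ defined by $\Phi_{rn}(\alpha_1, \dots, \alpha_m)$. First I would verify the hypotheses of Theorem \ref{refined ABP} with $q = p^r$: since $\Phi_{rn}$ is lower triangular, $\det \Phi_{rn} = (t-\theta)^n$ up to sign, so $\det \Phi_{rn}(0) = (-\theta)^n \neq 0$ and $\det \Phi_{rn}(\theta^{1/p^{ri}}) = (\theta^{1/p^{ri}} - \theta)^n \neq 0$ for every $i \geq 1$. Hence $M_{rn}$ has the \textbf{GP} property, and Theorem \ref{Galois theory}(d) gives
$$\dim \Gamma_{\Psi_{rn}} = \trdeg_{\bar{k}(t)} \bar{k}(t)(\Psi_{rn}) = \trdeg_{\bar{k}} \bar{k}(\Psi_{rn}(\theta)).$$
The nontrivial entries of $\Psi_{rn}$ are $\Omega_r^n$ and $\Omega_r^n L_{\alpha_i,rn}$, so dividing out the common factor $\Omega_r^n$ shows that $\bar{k}(t)(\Psi_{rn}) = \bar{k}(t)(\Omega_r^n, L_{\alpha_1,rn}, \dots, L_{\alpha_m,rn})$. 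Evaluating at $t = \theta$ and using $\Omega_r(\theta) = -1/\tilde{\pi}_r$, the analogous field equality holds with $\tilde{\pi}_r^n$ replacing $\Omega_r^n$ and $L_{\alpha_i,rn}(\theta)$ replacing $L_{\alpha_i,rn}$, establishing the second equality of the statement.

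For the first equality, I observe that $M_{rn}$ is of type \textbf{SV} with $h = 1$ and $n_1 = m$, so the structural exact sequence from \S\ref{Ai,Fi} reads
$$1 \to V \to \Gamma_{\Psi_{rn}} \to T \to 1,$$
with $V \subseteq \GG_a^m$ and $T \subseteq \GG_m$. By Lemma \ref{alg. ind. of Omega}, $\Omega_r^n$ is transcendental over $\bar{k}(t)$, forcing $T = \GG_m$ and $\dim \Gamma_{\Psi_{rn}} = 1 + \dim V$. The first equality therefore reduces to
$$\dim V = \dim_{\FF_{p^r}(\theta)} N_{rn} - 1.$$

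The heart of the proof is to establish a bijective correspondence between $\FF_{p^r}(\theta)$-linear relations among $\tilde{\pi}_r^n, L_{\alpha_1,rn}(\theta),\dots,L_{\alpha_m,rn}(\theta)$ and codimension constraints cutting out $V$ inside $\GG_a^m$. In one direction, any such relation lifts, via the specialization arguments of \cite[Thm.~3.1]{Chang-Yu}, to a linear identity with coefficients in $\FF_{p^r}(t)$ among the first-column entries of $\Psi_{rn}$; since by \eqref{Gamma Psi} every element of $\Gamma_{\Psi_{rn}}$ preserves such identities, each independent relation cuts out a proper $T$-stable closed subgroup of $V$. Conversely, each $T$-stable hyperplane of $\GG_a^m$ containing $V$ arises from an $\FF_{p^r}(t)$-linear form on the coordinate entries, which upon specialization at $t = \theta$ descends to an $\FF_{p^r}(\theta)$-linear relation among the values. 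The main technical obstacle is precisely this descent from functional relations over $\FF_{p^r}(t)$ to value relations over $\FF_{p^r}(\theta)$, which is the core input supplied by \cite[Thm.~3.1]{Chang-Yu}. Once this dictionary is in place, a dimension count delivers the first equality and completes the proof.
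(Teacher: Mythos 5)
Your proposal matches the paper's treatment: the second equality is obtained exactly as in the text by checking the hypotheses of Theorem \ref{refined ABP} for $\Phi_{rn}$ (so that the pre-$t$-motive has the \textbf{GP} property and Theorem \ref{Galois theory} identifies $\trdeg_{\bar{k}(t)}\bar{k}(t)(\Psi_{rn})$ with $\trdeg_{\bar{k}}\bar{k}(\Psi_{rn}(\th))$), while the first equality is precisely the statement of \cite[Thm.~3.1]{Chang-Yu}, which the paper simply cites. Your additional \textbf{SV}-type dimension count for the first equality is essentially an outline of Chang-Yu's own proof and is redundant once that citation is made (it in fact re-uses the cited theorem as its ``core input''), but it introduces no error.
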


\subsection{Formulas for zeta values}
\subsubsection{Euler-Carlitz relations.} In this subsection, we fix a
positive integer $r$. In \cite{Carlitz1} Carlitz introduced the
power sum
\[
\zeta_{r}(n):=\sum_{
\begin{array}{c}
  a\in {\FF}_{p^{r}}[\th] \\
  a\ \mathrm{monic} \\
\end{array}}\frac{1}{a^{n}}\in\overline{k_{\infty}}\quad \textnormal{($n$ a
positive integer)}
\]
which are the Carltz zeta values associated to
${\FF}_{p^{r}}[{\th}]$.

Writing down a $p^{r}$-adic expansion $\sum_{i}n_{ri}p^{ri}$ of $n$,
we let
\[
\Gamma_{r,n+1}:=\prod_{i=0}^{\infty}D_{ri}^{n_{ri}}.
\]
We call $\Gamma_{r,n+1}$ the {\bf{Carlitz factorials}} associated to
${\FF}_{p^{r}}[{\th}]$.  The Bernoulli-Carlitz {\bf{numbers}}
$B_{rn}$ in ${\FF}_{p}({\th})$ are given by the following expansions
from the Carlitz exponential series
$$\frac{z}{\exp_{C_{r}}(z)}=\sum_{n=0}^{\infty} \frac{B_{rn}}{\Gamma_{r,n+1}}z^{n}.$$
Carlitz proved the {\bf{Euler-Carlitz}} relations:
\begin{theorem}{\rm{(Carlitz, \cite{Carlitz1})}}
For all positive integer $n$ divisible by $p^{r}-1$, one has
\begin{equation}\label{Euler-Carlitz}
\zeta_{r}(n)=\frac{B_{rn}}{\Gamma_{r,n+1}}\tilde{\pi}_{r}^{n} .
\end{equation}
\end{theorem}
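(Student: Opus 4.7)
My plan is to derive the identity by comparing two expansions of the meromorphic function $z/\exp_{C_r}(z)$ about $z=0$: the Bernoulli-Carlitz generating series on one side, and a partial fraction expansion on the other.

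First, I would exploit the product formula
\[
\exp_{C_r}(z) = z\prod_{0\neq a\in\FF_{p^r}[\th]}\left(1-\frac{z}{a\tilde{\pi}_r}\right),
\]
which shows that $\exp_{C_r}$ has simple zeros exactly at the lattice points $a\tilde{\pi}_r$, $a\in\FF_{p^r}[\th]$. A key feature of positive characteristic is that $\exp_{C_r}'(z)=1$ (since every exponent appearing in $\exp_{C_r}(z)$ is a positive power of $p$), so all residues are equal to $1$. Combined with the non-archimedean growth $|a\tilde{\pi}_r|_\infty\to\infty$, this legitimizes the partial fraction decomposition
\[
\frac{1}{\exp_{C_r}(z)} = \sum_{a\in\FF_{p^r}[\th]}\frac{1}{z-a\tilde{\pi}_r}
\]
as a convergent expansion outside the lattice.

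Next, I would expand each term for $a\ne 0$ as the geometric series $-\sum_{n\ge 0} z^n/(a\tilde{\pi}_r)^{n+1}$, combine with the $a=0$ contribution $1/z$, and multiply by $z$ to obtain
\[
\frac{z}{\exp_{C_r}(z)} = 1 - \sum_{n\ge 1}\frac{z^n}{\tilde{\pi}_r^{n}}\sum_{0\neq a\in\FF_{p^r}[\th]}\frac{1}{a^{n}}.
\]
The inner power sum is handled by writing each nonzero $a$ uniquely as $c\cdot m$ with $c\in\FF_{p^r}^\times$ and $m$ monic, yielding
\[
\sum_{0\neq a\in\FF_{p^r}[\th]}\frac{1}{a^n} = \left(\sum_{c\in\FF_{p^r}^\times} c^{-n}\right)\zeta_r(n).
\]
When $(p^r-1)\mid n$ the character sum collapses to $|\FF_{p^r}^\times| = p^r-1 \equiv -1 \pmod p$, so the coefficient of $z^n$ in $z/\exp_{C_r}(z)$ equals $\zeta_r(n)/\tilde{\pi}_r^n$. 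Comparing with the defining expansion $z/\exp_{C_r}(z)=\sum_n (B_{rn}/\Gamma_{r,n+1})\,z^n$ yields the Euler-Carlitz formula.

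The only subtle point is the justification of the partial fraction expansion: one must check that, in the Tate-algebra/$\infty$-adic sense, the sum $\sum_a 1/(z-a\tilde{\pi}_r)$ represents the meromorphic function $1/\exp_{C_r}(z)$, rather than merely matching its poles and residues. This follows because the difference is an entire function on $\CC_\infty$ whose values decay to $0$ on the complements of shrinking neighborhoods of the lattice, hence must vanish identically. Once this is in hand, the remainder of the argument is a bookkeeping exercise in coefficient comparison and the mod-$p$ reduction of the cyclic character sum.
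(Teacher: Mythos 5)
The paper states this result as a citation to Carlitz (1935) and gives no proof of its own, so there is no in-text argument to compare against; your proof must stand on its own merits, and it does. The approach is the classical one, directly parallel to Euler's derivation of $\zeta(2k)$ from the partial-fraction expansion of $\pi\cot\pi z$, and the computation is correct.

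One remark on the step you rightly flag as subtle. Rather than matching poles and residues and then arguing that the entire correction term vanishes (a Liouville-type claim whose non-archimedean formulation via ``decay on complements of shrinking neighborhoods'' is itself delicate to make precise), you can obtain the partial-fraction identity in one stroke by logarithmic differentiation. Since every exponent in $\exp_{C_r}$ is a power of $p$, one has $\exp_{C_r}'(z)=1$ identically, so $1/\exp_{C_r}(z)$ \emph{is} the logarithmic derivative of $\exp_{C_r}(z)$. Differentiating the convergent product
\[
\exp_{C_r}(z)=z\prod_{0\neq a\in\FF_{p^r}[\th]}\Bigl(1-\frac{z}{a\tilde{\pi}_r}\Bigr)
\]
logarithmically term by term gives $\frac{1}{\exp_{C_r}(z)}=\sum_{a\in\FF_{p^r}[\th]}\frac{1}{z-a\tilde{\pi}_r}$ directly, with the same convergence control, $|a\tilde{\pi}_r|_\infty\to\infty$, that underwrites your subsequent geometric-series rearrangement. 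The remainder of the argument---the unique decomposition $a=cm$ with $c\in\FF_{p^r}^\times$ and $m$ monic, and the evaluation of $\sum_{c\in\FF_{p^r}^\times}c^{-n}$ as $p^r-1\equiv -1\in\FF_p$ when $(p^r-1)\mid n$ and as $0$ otherwise---is a correct bookkeeping exercise; it also shows as a byproduct that $B_{rn}=0$ for $n\ge 1$ with $(p^r-1)\nmid n$.
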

We call a positive integer $n$ $(p, r)$-{\bf{even}} if it is
divisible by $p^{r}-1$, otherwise we call $n$ $(p, r)$-{\bf{odd}}.
In particular, when $p=2$ and $r=1$, all positive integers are
${\bf{even}}$.

\subsubsection{The Anderson-Thakur formula.}
In \cite{Anderson-Thakur}, Anderson and Thakur introduced the $n$-th
tensor power of the Carlitz ${\FF}_{p^{r}}[t]$-module $C_{r}$, and
they related $\zeta_{r}(n)$ to the last coordinate of the logarithm
associated to the $n$-th tensor power of $C_{r}$ for each positive
integer~$n$. More precisely, they interpreted $\zeta_{r}(n)$ as
${\FF}_{p^{r}}(\th)$-linear combinations of $n$-th Carlitz
polylogarithms of algebraic numbers:

\begin{theorem}{\rm{(Anderson-Thakur,
\cite{Anderson-Thakur})}} Given any positive integers $r$ and $n$,
one can find a sequence $h_{rn,0}, \dots, h_{rn,l_{rn}}\in
{\FF}_{p^{r}}(\th)$, $l_{rn}<\frac{np^{r}}{p^{r}-1}$, such that the
following identity holds
\begin{equation}\label{Anderson-Thakur}
\zeta_{r}(n)=\sum_{i=0}^{l_{rn}}h_{rn,i}\Plog_{rn}(\th^{i}),
\end{equation}
where $\Plog_{rn}(z)$ is defined as in \eqref{def of polylog}. In
the special case of $n\leq p^{r}-1$,
$$\zeta_{r}(n)=\Plog_{rn}(1). $$
\end{theorem}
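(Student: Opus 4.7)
The plan is to realize $\zeta_r(n)$ as the last coordinate of the logarithm of the $n$-th tensor power $C_r^{\otimes n}$ of the Carlitz $\FF_{p^r}[t]$-module, evaluated at a specific algebraic vector, and then to split that vector into a finite $\FF_{p^r}(\theta)$-linear combination whose component logarithms produce the values $\Plog_{rn}(\theta^i)$.

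First, I would construct $C_r^{\otimes n}$ as the abelian $t$-module on $\GG_a^n$ whose differential $\partial(C_r^{\otimes n})_t$ is the scalar-plus-nilpotent matrix $\theta I + N$, and whose formal exponential $\exp_{C_r^{\otimes n}}\colon \CC_\infty^n \to \CC_\infty^n$ has the $p^r$-Frobenius expansion $\mathbf{z} + \sum_{i\geq 1} Q_i \mathbf{z}^{(ri)}$ for explicit matrices $Q_i$; its formal inverse $\log_{C_r^{\otimes n}}$ then has an analogous expansion in which the last coordinate picks up a factor $1/L_{ri}^n$ from the $i$-th summand. Next, I would introduce the Anderson--Thakur polynomials $H_{rn}(y)\in \FF_{p^r}[\theta,y]$, defined by the generating identity
\begin{equation*}
\sum_{n\geq 0} \frac{H_{rn}(y)}{\Gamma_{r,n+1}}\, x^n = \left(1 - \sum_{i\geq 1} \frac{(y-\theta^{p^r})(y-\theta^{p^{2r}})\cdots(y-\theta^{p^{ri}})}{D_{ri}}\, x^i\right)^{-1},
\end{equation*}
or a close variant of it. A Newton-polygon analysis of this generating series gives the degree bound $\deg_y H_{rn}(y) = l_{rn} < np^r/(p^r-1)$; in the range $n\leq p^r-1$ the polynomial collapses to the constant $\Gamma_{r,n+1}$, which will deliver the stated special case $\zeta_r(n) = \Plog_{rn}(1)$.

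The heart of the proof is the key identity
\begin{equation*}
\Gamma_{r,n+1}\,\zeta_r(n) = \text{last coordinate of } \log_{C_r^{\otimes n}}\bigl(\mathbf{v}_{rn}\bigr),
\end{equation*}
where $\mathbf{v}_{rn}\in \bar{k}^n$ is the vector whose only nonzero entry is a twisted evaluation of $H_{rn}$ at $y=\theta$. I would establish this by expanding the power sum $\zeta_r(n) = \sum_{a \text{ monic}} 1/a^n$ via the product formula for $\exp_{C_r}$ together with the $D_{ri}$-series for $\exp_{C_r^{\otimes n}}$, and then recognizing the resulting double series as precisely the $L_{ri}^{-n}$-expansion of the last coordinate of $\log_{C_r^{\otimes n}}(\mathbf{v}_{rn})$; the Anderson--Thakur generating function is tailored exactly so that these two series match.

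Finally, writing $H_{rn}(y) = \sum_{i=0}^{l_{rn}} c_{rn,i}\, y^i$ and invoking the $\FF_{p^r}[\theta]$-linearity of $\log_{C_r^{\otimes n}}$, I would decompose $\mathbf{v}_{rn}$ as a sum of vectors corresponding to the monomials $y^i$. For each $i$ the last coordinate of the logarithm of the $i$-th piece is readily seen to equal $\Gamma_{r,n+1}\,\Plog_{rn}(\theta^i)$, which upon setting $h_{rn,i} := c_{rn,i}/\Gamma_{r,n+1}\in \FF_{p^r}(\theta)$ yields the desired formula. The main obstacle will be the key identity in the third paragraph: one must match the Anderson--Thakur generating series with the logarithm expansion term-by-term while rigorously controlling the $\infty$-adic convergence of all the intermediate double series, and verifying that $\mathbf{v}_{rn}$ lies inside the convergence polydisc of $\log_{C_r^{\otimes n}}$ using the degree bound $l_{rn} < np^r/(p^r-1)$.
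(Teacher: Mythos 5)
The paper offers no proof of this statement: it is quoted verbatim as a cited theorem of Anderson and Thakur \cite{Anderson-Thakur}, so there is no internal argument to compare against. Your sketch is a fair reconstruction of the Anderson--Thakur argument and follows the same strategy as the source: realize $\Gamma_{r,n+1}\zeta_r(n)$ via the last coordinate of $\log_{C_r^{\otimes n}}$ at a special algebraic point encoded by the Anderson--Thakur polynomial $H_{rn}(y)$, then split $H_{rn}$ into monomials and use $\FF_{p^r}[\theta]$-linearity of the logarithm to produce the combination $\sum_i h_{rn,i}\Plog_{rn}(\theta^i)$.

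One detail is off as written. The generating identity for the Anderson--Thakur polynomials should carry $x^{p^{ri}}$, not $x^i$, in the inner sum, and that sum must begin at $i=0$ (the $i=0$ term being simply $x$, since the empty product and $D_{r0}=1$). With those corrections the coefficient of $x^n$ is supported on representations $n=\sum_k p^{r i_k}$, from which one deduces $l_{rn}=\deg_y H_{rn} < np^r/(p^r-1)$; and for $n\le p^r-1$ only the $i=0$ term can contribute, so $H_{rn}=\Gamma_{r,n+1}$ is constant, giving the special case $\zeta_r(n)=\Plog_{rn}(1)$. As for the convergence issue you flag at the end: the degree bound is exactly what guarantees that each $\theta^i$ with $0\le i\le l_{rn}$ lies in the disc $|z|_\infty<|\theta|_\infty^{np^r/(p^r-1)}$ on which $\Plog_{rn}$ converges, so the right-hand side of \eqref{Anderson-Thakur} is unconditionally well-defined; the more delicate part, as you correctly anticipate, is justifying the key identity relating $\zeta_r(n)$ to the log of the tensor power at the special point, which Anderson and Thakur establish through the structure of the associated $t$-motive rather than by naive evaluation of the logarithm series.
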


\begin{definition}\label{choice of i r0,..}
Given any positive integer $r$, for each $n\in {\NN}$,
$(p^{r}-1)\nmid n$, with $l_{rn}$ as given by
\eqref{Anderson-Thakur}, we fix a finite subset
$$\{ \a_{0,rn}, \dots, \a_{m_{rn},rn} \}\subseteq \{1,\th, \dots, \th^{l_{rn}} \}$$
such that
$$\{ \tilde{\pi}_{r}^{n},\mathcal{L}_{0,rn}(\th), \dots, \mathcal{L}_{m_{rn},rn}(\th) \}$$
and
$$\{ \tilde{\pi}_{r}^{n},\zeta_{r}(n), \mathcal{L}_{1,rn}(\th), \dots, \mathcal{L}_{m_{rn},rn}(\th)  \}$$
are ${\FF}_{p^{r}}({\th})$-bases for $N_{rn}$, where
$\mathcal{L}_{j,rn}(t):=L_{\a_{j},rn}(t)$ for $j=0, \dots, m_{rn}$.
This can be done because of \eqref{Anderson-Thakur} (cf. \cite[$\S$
4.1]{Chang-Yu}) and note that $m_{rn}+2$ is the dimension of
$N_{rn}$ over $\FF_{p^{r}}(\th)$. For the case of $p=2$ and $r=1$,
we put $m_{11}+1:=0$.
\end{definition}

\begin{definition}
Given any positive integers $s$ and $d$ with $d \geq 2 $. For each
$1\leq r\leq d$ we define
$$
\begin{array}{ll}
  U_{r}(s)=\{ 1 \}, & \hbox{ if }p=2 \hbox{ and }r=1; \\
  U_{r}(s)=\{1\leq n \leq s;p\nmid n,\ (p^{r}-1) \nmid n \}, & \hbox{ otherwise}.\\
\end{array} $$
For each $n\in U_{r}(s)$, we define that if $p=2$ and $r=1$, $$
\begin{array}{l}
 {\Phi}_{rn}:=(t-{\th})\in\GL_{1}(\bar{k}(t)),\\
 {\Psi}_{rn}:=\Omega_{1}\in \GL_{1}(\mathbb{L}),\\
\end{array}  $$ otherwise
\[
\begin{array}{l}
{\Phi}_{rn}:=\Phi_{rn}(\a_{0,rn}, \dots, \a_{m_{rn},rn} )\in
\GL_{(m_{rn}+2)}(\bar{k}(t)),   \\
{\Psi}_{rn}:=\Psi_{rn}(\a_{0,rn}, \dots, \a_{m_{rn},rn} )\in
\GL_{(m_{rn}+2)}(\mathbb{L}).   \\
\end{array}
\]
By Theorem \ref{tr.deg of polylog} we have that
$$\dim\Gamma_{\Psi_{rn}}= \trdeg_{\bar{k}(t)} \bar{k}(t)({\Psi}_{rn})=m_{rn}+2.$$
\end{definition}

Put ${\Phi}_{r}:= \oplus_{n\in U_{r}(s)}{\Phi}_{rn}$, then
${\Phi}_{r}$ defines a rigid analytically trivial pre-$t$-motive
${M}_{r}$ over $\FF_{p^{r}}$ with rigid analytical trivialization
${\Psi}_{r}:= \oplus_{n\in U_{r}(s)}{\Psi}_{rn}$ (cf.
\eqref{functional equation of tildePsi(rn)}). Moreover, $M_{r}$ is
of type {\bf{SV}} and by Theorem \ref{refined ABP} $M_{r}$ has the
${\bf{GP}}$ property. The main theorem of \cite{Chang-Yu} is the
following:

\begin{theorem}{\rm{(Chang-Yu}, \cite[Thm.~4.5]{Chang-Yu})}\label{Chang-Yu}
For any positive integers $s$ and $r$, the Galois group
$\Gamma_{M_{r}}$ over $\FF_{p^{r}}(t)$ is full, i.e.,
$$\dim\Gamma_{{M}_{r}} =1+\sum_{n\in U_{r}(s)}(m_{rn}+1).$$
\end{theorem}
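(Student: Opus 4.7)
The plan is to study $\Gamma_{M_r}$ through the short exact sequence $1 \to V \to \Gamma_{M_r} \to T \to 1$ of \eqref{S.E.S.}, where $V$ is the unipotent radical and $T$ is the torus quotient coming from the direct sum of diagonals of $M_r$. The goal is to show that $\dim T = 1$ and that $V$ equals the full ambient coordinate vector group $G = \oplus_{n \in U_r(s)} G_n$ with $\dim G_n = m_{rn}+1$; summing the two dimensions then yields the stated formula.

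First I would identify $T$. Each diagonal of $M_r$ is the one-dimensional sub-pre-$t$-motive $[(t-\theta)^n]$ with rigid analytic trivialization $\Omega_r^n$, for $n \in U_r(s)$. Since every such trivialization is a power of the single function $\Omega_r$, which is transcendental over $\bar{k}(t)$ by Lemma \ref{alg. ind. of Omega} (with $d=1$), the torus $T$ is cut out from $\GG_m^{|U_r(s)|}$ by the obvious multiplicative relations $X_{n_1}^{n_2} = X_{n_2}^{n_1}$; in particular $T$ is the one-parameter subgroup $\{(\lambda^n)_{n \in U_r(s)} : \lambda \in \GG_m\}$, so $\dim T = 1$.

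Next I would handle each summand $M_{rn}$ in isolation. Theorem \ref{tr.deg of polylog} together with the choice of $\alpha_{0,rn},\ldots,\alpha_{m_{rn},rn}$ in Definition \ref{choice of i r0,..} gives $\dim \Gamma_{M_{rn}} = m_{rn}+2$, so $M_{rn}$ has full Galois group and the unipotent radical $V_{rn}$ of $\Gamma_{M_{rn}}$ equals $G_n$. The Tannakian projection of Remark \ref{remark surjection as projection} provides a surjection $\Gamma_{M_r} \twoheadrightarrow \Gamma_{M_{rn}}$ which, because both groups are solvable and the images of unipotent elements are unipotent, restricts to a surjection $\pi_n : V \twoheadrightarrow G_n$.

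Finally, I would combine these inputs via a weight-space argument. Conjugation by $T$ acts on the coordinates of $G_n$ through the character $\lambda \mapsto \lambda^n$, read off directly from the SV matrix defining the $n$-th block. Since the integers $n \in U_r(s)$ are pairwise distinct, they give pairwise distinct characters of $T$, and so the $T$-stable subgroup $V \subseteq G$ decomposes as a direct sum of weight spaces $V = \oplus_{n} (V \cap G_n)$. Combined with the surjectivity of $\pi_n$, this forces $V \cap G_n = G_n$ for every $n$, hence $V = G$. The main obstacle is verifying cleanly both that the $T$-action on $G_n$ has weight exactly $n$ and that $\Gamma_{M_r} \twoheadrightarrow \Gamma_{M_{rn}}$ restricts to a surjection of unipotent radicals; both are expected consequences of the SV structure of \S\ref{Ai,Fi} and Tannakian functoriality, but require carefully tracing through the explicit block-diagonal form of $\Phi_r$. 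Once these structural facts are in hand, the distinct-weights argument is a direct analog of the final lemma in the proof of Theorem \ref{main thm}, specialized from a higher-rank torus to the present one-dimensional torus acting with distinct characters on different blocks.
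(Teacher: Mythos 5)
First, note that the paper does not prove this theorem at all: it is imported verbatim from \cite[Thm.~4.5]{Chang-Yu}, so there is no in-paper argument to compare against. Your overall strategy (torus part of dimension one; each block $\Gamma_{\Psi_{rn}}$ full by Theorem \ref{tr.deg of polylog}; Tannakian surjections $V\twoheadrightarrow G_n$ of unipotent radicals; then a $T$-weight argument to force $V=G$) is the natural route and is consistent with both the cited proof and the paper's own Theorem \ref{main thm}. The identification of $T$, the fullness of each block, and the surjectivity of $V\to G_n$ are all fine (your appeal to Lemma \ref{alg. ind. of Omega} ``with $d=1$'' is not literally available since that lemma assumes $d\geq 2$, but transcendence of $\Omega_r$ over $\bar{k}(t)$ follows from Wade's theorem together with the {\bf GP} property of the Carlitz motive, so this is only cosmetic).

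The genuine gap is in the decisive step: in characteristic $p$ it is \emph{false} that a $T$-stable closed connected subgroup of a vector group with pairwise distinct weights splits as the direct sum of its intersections with the weight blocks. Subgroups of $\GG_a^N$ are cut out by additive ($p$-)polynomials, and Frobenius-twisted graphs survive torus-stability: for example $V=\{(x,x^{p})\}\subseteq \GG_a^2$ is smooth, connected, stable under $\lambda\cdot(x,y)=(\lambda x,\lambda^{p}y)$, surjects onto both coordinate lines, yet is a proper subgroup — exactly the configuration your argument would rule out using only ``pairwise distinct characters.'' What saves the theorem is that a cross-block isotypic relation forces $n p^{m}=n' p^{m'}$ for two weights $n\neq n'$, which is impossible precisely because every $n\in U_r(s)$ is prime to $p$; this is where the exclusion of multiples of $p$ (i.e.\ the Frobenius relations $\zeta_r(pn)=\zeta_r(n)^{p}$) enters essentially, and your proof never invokes it. Note also that the final Lemma after Theorem \ref{main thm}, which you cite as the model, does not face this issue because there the two blocks carry characters of two \emph{independent} $\GG_m$-factors, so no $p$-power twist can match them; your one-dimensional torus situation genuinely needs the condition $p\nmid n$, and the argument should be rewritten to say so (e.g.\ by decomposing the $T$-homogeneous additive polynomials defining $V$ and using $p\nmid nn'$ to see that no generator can involve two distinct blocks).
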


\subsection{Proof of Theorem \ref{main thm introduction}}
Given any integer $d\geq 2$, we put $\ell:= \lcm (1,\ldots,d)$ and
$\ell_{r}:=\frac{\ell}{r}$ for $r=1,\ldots,d$. For each $1\leq r\leq
d$ let ${\bf{M}}_{r}:=M_{r}^{(\ell_{r})} $ be the $\ell_{r}$-th
derived pre-$t$-motive over $\FF_{p^{\ell}}$ of $M_{r}$ defined as
above, then it is still of type {\bf{SV}}. By
Proposition~\ref{direct sum of GP motives} each ${\bf{M}}_{r}$ has
the {\bf{GP}} property and by Theorem \ref{Chang-Yu} its Galois
group $\Gamma_{\mathbf{M}_{r}}$ is full. Further, for each $1\leq r
\leq d$ any diagonal of $\mathbf{M}_{r}$ has canonical rigid
analytical trivialization given by $\Omega_{r}^{n}$ for some $n\in
U_{r}(s)$ and hence its Galois group is $\GG_{m}$ because
$\Omega_{r}$ is transcendental over $\bar{k}(t)$.

Since by Lemma \ref{alg. ind. of Omega} the functions
$\Omega_{1},\ldots,\Omega_{d}$ are algebraically independent over
$\bar{k}(t)$, particularly the Galois group of $N_{i}\oplus N_{j}$
is a two-dimensional torus over $\FF_{p^{\ell}}(t)$ for any diagonal
$N_{i}$ (resp. $N_{j}$) of $\mathbf{M}_{i}$ (resp. $\mathbf{M}_{j}$)
with $i\neq j$, $1\leq i,j\leq d$. Put ${\bf{M}}:=\oplus_{r=1}^{d}
{\bf{M}}_{r}$, then by  Corollary \ref{cor of refined ABP}
${\bf{M}}$ has the {\bf{GP}} property. Applying Theorem \ref{main
thm} to this situation, we obtain the explicit dimension of
$\Gamma_{\mathbf{M}}$:
\begin{theorem}\label{main thm of last sec}
Given any positive integers $s$ and $d$ with $d\geq 2$, let
${\bf{M}}$ be defined as above. Then the Galois group
$\Gamma_{{\bf{M}}}$ is full, i.e.,
$$\dim \Gamma_{\bf{M}}=d+ \sum_{r=1}^{d}(m_{rn}+1  ) .$$

\end{theorem}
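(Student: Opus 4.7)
The plan is to apply Theorem \ref{main thm} directly to the family $\{\mathbf{M}_r\}_{r=1}^{d}$, so the bulk of the work is verifying its three hypotheses for this specific collection; the dimension formula then follows by counting.

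First I would verify that each $\mathbf{M}_r$ is of type \textbf{SV} with full Galois group. The matrix $\Phi_r=\oplus_{n\in U_r(s)}\Phi_{rn}$ has the block form required by \S\ref{Ai,Fi}, and passing to $\mathbf{M}_r = M_r^{(\ell_r)}$ replaces $\Phi_r$ by $\Phi_r^{(-(\ell_r-1))}\cdots\Phi_r^{(-1)}\Phi_r$, which is still block diagonal of the same \textbf{SV} shape (Frobenius twisting preserves the block structure and sends $t-\th$ to $t-\th^{1/p^j}$, keeping the same diagonal-block decomposition). The rigid analytic trivialization $\Psi_r$ is unchanged, so the coordinate vector group $G_r$ attached to $\mathbf{M}_r$ is identical to the one attached to $M_r$. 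By Theorem \ref{Chang-Yu}, $\Gamma_{M_r}$ is full, i.e.\ its unipotent radical equals $G_r$; and by Theorem \ref{Galois theory}(b), $\dim\Gamma_{\mathbf{M}_r} = \trdeg_{\bar{k}(t)}\bar{k}(t)(\Psi_r) = \dim\Gamma_{M_r}$. Combined with the fact that $G_r$ is the same $\bar{k}(t)$-scheme in both settings, this forces $\Gamma_{\mathbf{M}_r}$ to also be full.

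Next I would check diagonal independence. Any diagonal $N_i$ of $\mathbf{M}_i$ has rigid analytic trivialization $\Omega_i^{a}$ for some positive integer $a\in U_i(s)$ (or $a=1$ in the degenerate $p=2$, $r=1$ case), and similarly any diagonal $N_j$ of $\mathbf{M}_j$ has trivialization $\Omega_j^{b}$ with $b\geq 1$. The Galois group $\Gamma_{N_i\oplus N_j}$ sits inside the split torus $\GG_m^2/\FF_{p^\ell}(t)$; by Theorem \ref{Galois theory}(b) its dimension equals $\trdeg_{\bar{k}(t)}\bar{k}(t)(\Omega_i^a,\Omega_j^b)$. Since Lemma \ref{alg. ind. of Omega} gives algebraic independence of $\Omega_1,\dots,\Omega_d$ over $\bar{k}(t)$, the functions $\Omega_i^a$ and $\Omega_j^b$ are themselves algebraically independent whenever $i\neq j$ and $a,b\geq 1$; hence $\Gamma_{N_i\oplus N_j}$ is a two-dimensional torus, as required.

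Having checked all three hypotheses, Theorem \ref{main thm} applies and yields that $\Gamma_{\mathbf{M}}$ is full, i.e.\ its unipotent radical coincides with the full coordinate vector group $G=\oplus_{r=1}^{d} G_r$, whose dimension is $\sum_{r=1}^{d}\sum_{n\in U_r(s)}(m_{rn}+1)$. It remains to compute the dimension of the torus quotient $T$. By construction $T$ is a subtorus of the $(\sum_{r=1}^d |U_r(s)|)$-dimensional split torus whose defining characters come from multiplicative relations among the diagonals $\{\Omega_r^{n} : 1\leq r\leq d,\ n\in U_r(s)\}$; within each fixed $r$, these diagonals are all integer powers of the single function $\Omega_r$, contributing one torus dimension per $r$, and by Lemma \ref{alg. ind. of Omega} there are no nontrivial multiplicative relations among the $\Omega_r$ across different $r$. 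Hence $\dim T = d$, giving $\dim\Gamma_{\mathbf{M}}=d+\sum_{r=1}^{d}\sum_{n\in U_r(s)}(m_{rn}+1)$. The main conceptual step is the second paragraph, since the \textbf{SV} and fullness hypotheses transfer cleanly from the $M_r$ setting of \cite{Chang-Yu}, whereas diagonal independence is precisely the point where the uniformization of Frobenius twisting operators across varying constant fields must be invoked through Lemma \ref{alg. ind. of Omega}.
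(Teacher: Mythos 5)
Your proposal is correct and follows essentially the same route as the paper: the paragraph preceding Theorem \ref{main thm of last sec} verifies exactly the hypotheses you check (type \textbf{SV} and fullness of each $\mathbf{M}_r$ via Theorem \ref{Chang-Yu}, diagonal independence via Lemma \ref{alg. ind. of Omega}) and then invokes Theorem \ref{main thm}, with the torus contributing $d$ and the unipotent radical contributing $\sum_{r=1}^{d}\sum_{n\in U_r(s)}(m_{rn}+1)$. The only quibble is notational: in the $\s_p$-twisting convention the derived matrix is $\Phi_r^{(-r(\ell_r-1))}\cdots\Phi_r^{(-r)}\Phi_r$ (twists by multiples of $r$), which does not affect your argument.
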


 As a consequence, we completely determine all the algebraic relations among the
families of Carlitz zeta values:

\begin{corollary}\label{cor of the main thm}
Given any positive integers $d$ and $s$, the transcendence degree of
the field
$$ \bar{k}\bigl(\cup_{r=1}^{d}\{
\tilde{\pi}_{r},\zeta_{r}(1), \dots, \zeta_{r}(s)\}\bigr)$$ over
$\bar{k}$ is
$$ \sum_{r=1}^{d}\biggl(s-\biggl\lfloor\frac{s}{p}  \biggr\rfloor
- \biggl\lfloor\frac{s}{p^{r}-1} \biggr\rfloor + \biggl\lfloor
\frac{s}{p(p^{r}-1)} \biggr\rfloor +1 \biggr).$$
\end{corollary}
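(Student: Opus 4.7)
The idea is to translate the Galois-group dimension of Theorem~\ref{main thm of last sec} into the stated transcendence degree through three ingredients: (i) the {\bf GP} property, which turns dimensions into transcendence degrees; (ii) the explicit basis change built into Definition~\ref{choice of i r0,..}, which lets us trade polylogarithm values for zeta values; and (iii) a clean inclusion-exclusion count.

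After noting that $d=1$ is covered by Theorem~\ref{Chang-Yu}, I would first invoke Corollary~\ref{cor of refined ABP} to conclude that $\mathbf{M}$ has the {\bf GP} property, so Theorem~\ref{Galois theory}(d) together with Theorem~\ref{main thm of last sec} gives
\[
\trdeg_{\bar{k}}\bar{k}(\Psi_{\mathbf{M}}(\theta))=d+\sum_{r=1}^{d}\sum_{n\in U_r(s)}(m_{rn}+1).
\]
Using $\Omega_r(\theta)=-1/\tilde{\pi}_r$, this field has the same transcendence degree over $\bar{k}$ as the field generated by $\tilde{\pi}_1,\ldots,\tilde{\pi}_d$ together with the polylogarithm values $\mathcal{L}_{j,rn}(\theta)$ for $j=0,\dots,m_{rn}$. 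Write $L:=\bar{k}\bigl(\cup_{r=1}^{d}\{\tilde{\pi}_r,\zeta_r(1),\dots,\zeta_r(s)\}\bigr)$. Next I would sandwich $\trdeg_{\bar{k}}L$. For the upper bound, the Frobenius relations $\zeta_r(p n)=\zeta_r(n)^{p}$ and the Euler-Carlitz identities (\ref{Euler-Carlitz}) reduce the generating set of $L$ modulo algebraic dependence to $\{\tilde{\pi}_r\}\cup\{\zeta_r(n):n\in U_r(s)\}$, of total size $d+\sum_r |U_r(s)|$. For the matching lower bound, Definition~\ref{choice of i r0,..} permits replacement of $\mathcal{L}_{0,rn}(\theta)$ by $\zeta_r(n)$ inside the generating set of $\bar{k}(\Psi_{\mathbf{M}}(\theta))$ (an $\FF_{p^r}(\theta)$-linear change of basis within $N_{rn}$), exhibiting
\[
\bar{k}(\Psi_{\mathbf{M}}(\theta))=L\bigl(\mathcal{L}_{j,rn}(\theta):1\le r\le d,\;n\in U_r(s),\;1\le j\le m_{rn}\bigr)
\]
up to algebraic extension; subadditivity of transcendence degree under adjunction of $\sum_r\sum_n m_{rn}$ generators then forces $\trdeg_{\bar{k}}L\ge d+\sum_r |U_r(s)|$, matching the upper bound.

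The last step is purely combinatorial: since $\gcd(p,p^r-1)=1$, inclusion-exclusion on the conditions $p\mid n$ and $(p^r-1)\mid n$ yields $|U_r(s)|=s-\lfloor s/p\rfloor-\lfloor s/(p^r-1)\rfloor+\lfloor s/(p(p^r-1))\rfloor$ whenever $(p,r)\neq(2,1)$, and the degenerate case $p=2$, $r=1$ collapses both sides to $1$. Summing $|U_r(s)|+1$ over $r=1,\dots,d$ produces the stated formula. The main obstacle I anticipate is the lower bound: one must ensure that the Anderson-Thakur substitution of $\zeta_r(n)$ for $\mathcal{L}_{0,rn}(\theta)$ leaves exactly $m_{rn}$ excess polylogarithm generators over $L$ rather than $m_{rn}+1$, which is precisely the content of the basis property in Definition~\ref{choice of i r0,..} and is what makes the two dimension counts cancel correctly.
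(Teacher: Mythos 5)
Your proposal is correct and follows essentially the same route as the paper: algebraic independence of the $\Omega_r(\theta)$'s and the polylogarithm values from Theorem~\ref{main thm of last sec} together with the \textbf{GP} property, the exchange of $\mathcal{L}_{0,rn}(\theta)$ for $\zeta_r(n)$ via the two bases in Definition~\ref{choice of i r0,..}, and an inclusion-exclusion count of $U_r(s)$; the only differences are presentational, namely that you spell out the upper bound coming from the Euler-Carlitz and Frobenius relations (which the paper leaves implicit) and organize the lower bound as a counting sandwich rather than asserting directly that the exchanged set is algebraically independent. Just make sure the degenerate case $p=2$, $r=1$ (where $U_1(s)=\{1\}$ but $\zeta_1(1)$ is already a $k$-multiple of $\tilde{\pi}_1$, so the contribution is $1$, not $|U_1(s)|+1$) is carried through both bounds, as the paper does with its set $V_r(s)$.
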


\begin{proof}
We may assume $d\geq 2$ since
 the case $d=1$ is already given in \cite[Cor. 4.6]{Chang-Yu}. For $1\leq r \leq d$, let
$$
\begin{array}{l}
 V_{1}(s):=\varnothing, \hbox{ if }p=2;  \\
 V_{r}(s):=U_{r}(s),\hbox{ otherwise.}  \\
\end{array}  $$
Since $\mathbf{M}$ has the {\bf{GP}} property,
 by Theorem \ref{main thm of last sec} we see that the following elements
\begin{equation}\label{trans. basis}
\{ \Omega_{1}({\th}), \dots, \Omega_{d}({\th})\} \cup
\Bigl(\cup_{r=1}^{d}\cup_{n\in V_{r}(s)} \{ \mathcal{L}_{0,rn}(\th),
\dots, \mathcal{L}_{m_{rn},rn}(\th) \} \Bigr)
\end{equation}
are algebraically independent over $\bar{k}$.  In particular, by
Definition \ref{choice of i r0,..} we have that
$$
\{ \tilde{\pi}_{1}, \dots, \tilde{\pi}_{d} \} \cup \Bigl(
\cup_{r=1}^{d}\cup_{n\in V_{r}(s)} \{ \zeta_{r}(n) \} \Bigr)
$$
is an algebraically independent set over $\bar{k}$. Counting the
cardinality of  $V_{r}(s)$ for each $1\leq r \leq d$, we complete
the proof.
\end{proof}

\bibliographystyle{amsplain}

\end{document}